\newtheorem{lemma}{Lemma}[section]
\newtheorem{theorem}[lemma]{Theorem}
\newtheorem{remark}[lemma]{Remark}
\newtheorem{coro}[lemma]{Corollary}
\newtheorem{example}[lemma]{Example}
\email[T.~Caraballo]{caraball@us.es} \email[D.
Cheban]{cheban@usm.md} \subjclass{34C27, 34K14, 37B20, 37B55,
39A11}
\title[Structure of the Global Attractor for Difference Equations]{On the Structure of
the Global Attractor for Non-autonomous Difference Equations with
Weak Convergence.}
\author{Tom\'{a}s Caraballo}
\address[T. Caraballo]{Departamento de Ecuaciones Diferenciales y An\'{a}lisis Num\'{e}rico\\
Universidad de Sevilla\\
Apdo. Correos 1160 \\
41080-Sevilla (Spain)}
\author{David Cheban}
\address[D. Cheban]{State University of Moldova\\
Department of Mathematics and Informatics\\
A. Mateevich Street 60\\
MD--2009 Chi\c{s}in\u{a}u, Moldova}
\date{\today}
\subjclass{primary:34A24,34A30,34B55,34C11,34C27,37C55, 37C60,
37C65,37C70,37C75,34D05,34D20,34D23,34D45.}
\keywords{Non-autonomous dynamical systems; skew-product systems;
cocycles; global attractor; dissipative systems; convergent
systems; quasi-periodic, almost periodic, almost automorphic,
recurrent solutions; asymptotically almost periodic solutions;
difference equations}
\begin{document}

\begin{abstract}
{The aim of this paper is to describe the structure of global
attractors for non-autonomous difference systems of equations with
recurrent (in particular, almost periodic) coefficients. We
consider a special class of this type of systems (the so--called
weak convergent systems). We study this problem in the framework
of general non-autonomous dynamical systems (cocycles). We apply
the general results obtained in our early papers to study the
almost periodic (almost automorphic, recurrent)
and asymptotically almost periodic (asymptotically almost
automorphic, asymptotically recurrent) solutions of difference equations.}

\end{abstract}

\maketitle

\begin{center}
{\it Dedicated to Francisco Balibrea on occasion of his 60$^{\it th}$ birthday}
\end{center}

\section{Introduction}

Denote by  $C(\mathbb R\times \mathbb
R^n,\mathbb R^n)$ the space of all continuous functions $f:\mathbb
R\times \mathbb R^n\to \mathbb R^n$ equipped with the
compact-open topology, and by $|\cdot|$ the norm in  $\mathbb R^n$.

Consider a system of differential equations
\begin{equation}\label{eqI1*}
x'=f(t,x),
\end{equation}
where $f\in C(\mathbb R\times \mathbb R^n,\mathbb R^n)$. Assume
that the right-hand side of (\ref{eqI1*}) satisfies hypotheses
ensuring the existence, uniqueness and extendability of solutions
of (\ref{eqI1*}),  i.e., for all $(t_0,x_0)\in \mathbb R\times
\mathbb R^n$ there exists a unique solution $x(t;t_0,x_0)$ of
(\ref{eqI1*}) with initial data $t_0,x_0$, and defined for all
$t\geq t_0$.

Recall (see, for example, \cite{Dem_67,Yosh_1975}) that equation
(\ref{eqI1*}) is said to be uniformly dissipative (or uniformly
ultimately bounded) if there exists a number $r_0>0$ so that, for
every $r>0$, there is  $L(r)>0$ such that if $|x_0|\le r$, then
$|x(t;t_0,x_0)|\le r_0$ if $t\ge t_0+L(r)$.

In this framework, we aim to provide some results on an interesting classical question which is due to Seifert. It is described in the following way:

\textbf{\emph{Problem}} (G. Seifert \cite{FF_1971}): Suppose that (\ref{eqI1*}) is uniformly dissipative and the function $f$ is
almost periodic (with respect to the time variable $t$). Does equation
(\ref{eqI1*}) possess an almost periodic solution?

Fink and Fredericson \cite{FF_1971} and Zhikov \cite{Zhi_1972}
established that, in general,  even when (\ref{eqI1*}) is scalar
($n=1$), the answer to Seifert's question is negative.

However, in view of this negative general answer, there are still several aspects which can be analyzed and provide useful information on this problem. On the one hand, it would be very interesting to investigate the existence of certain classes of dissipative differential equations for which the response to Seifert's question is affirmative. And, on the other hand, one could be interested in finding out some additional assumptions (\lq\lq optimal\rq\rq \ if possible) which guarantee the existence of at least one almost periodic solution (see \cite{CC_I} for a short description of already published results concerning these questions).

In our earlier articles \cite{CC_2010,CC_I}, we have proved some partial results on these problems. However, our main aim in the present paper is to analyze the same kind of questions but for dissipative almost periodic difference equations. To be more precise, we will investigate  Seifert's problem for the following difference
equation
\begin{equation}\label{eqi1}
u(t+1)=f(t,u(t)), \ (t\in \mathbb Z_{+},\ u\in \mathbb R^n)
\end{equation}
where $\mathbb Z$ (respectively, $\mathbb Z_{+}$) is the set of
all entire (respectively, entire nonnegative) numbers and $f\in
C(\mathbb Z\times \mathbb R^n,\mathbb R^n)$, with almost periodic
coefficients.

We show that, in general, the answer to Seifert's question for
(\ref{eqi1}) is negative, even in the scalar case, i.e. when
$n=1$. We prove this claim by constructing an appropriate
counterexample.

Additionally, we prove a positive answer for a special class of difference equations. To that end, we impose stronger assumptions on the right-hand side of our difference equation, and introduce the so--called equations (\ref{eqi1}) with  \emph{weak convergence}, for
which we prove that the response to Seifert's question is affirmative.

We present our results within the framework of general non-autonomous
dynamical systems (cocycles) and we apply our abstract theory already  developed in some previous papers (see, e.g. \cite{CC_2010, CC_I} and the references therein)
to study some classes of difference equations.

The paper is organized as follows.

In Section 2, we briefly recall some notions (global attractor,
minimal set, point/com\-pact dissipativity, non-autonomous dynamical
systems with convergence, Levitan/Bohr almost periodicity, almost
automorphy, recurrence, Poisson stability, etc) and facts from the
theory of dynamical systems which will be necessary in this paper.
We give here also some results concerning a special class of
non-autonomous dynamical systems (NAS): the so-called NAS with
weak convergence.

In Section 3, we analyze Seifert's Problem for non-autonomous difference equations. We
first introduce the notion of discretization of a dynamical
system with continuous time (a flow), and we establish some
relations between the given flow and its discretization.

Next, in Section 3.2 we construct an example of a one-dimensional almost periodic dissipative
difference equation of type (\ref{eqi1}) without almost periodic
solutions, what can be interpreted as a negative answer to our problem under study.

However, we establish a positive response in Section 3.3, where we prove that an almost periodic dissipative
difference equation (\ref{eqi1}) with weak convergence admits a
unique almost periodic solution, and this solution, in general, is
not the unique  solution of this equation which is bounded on
$\mathbb Z$.

Section 3.4 is devoted to the study of uniform compatible solutions of (\ref{eqi1}) by
the character of recurrence (in the sense of B. A. Shcherbakov
\cite{Shch_1972,scher75,Shch_1985}). In this way we obtain some tests for the
existence of periodic (respectively, almost periodic, almost
automorphic, recurrent) solutions of equation (\ref{eqi1}) and
also asymptotically periodic (respectively,
 asymptotically almost periodic,
asymptotically almost automorphic, asymptotically recurrent)
solutions.

Finally, in Section 3.5, we refine these previous results of Section 3.4 for a special class of equations of type (\ref{eqi1}).

\section{Nonautonomous Dynamical Systems with Convergence and/or Weak
Convergence}\label{sec2}

Let us start by recalling some concepts and notation about the theory of
non-autonomous dynamical systems which will be necessary for our analysis. A more detailed analysis can be found, for instance, in \cite{CC_2010, CC_I}.

\subsection{Compact Global Attractors of Dynamical Systems}

Let $(X,\rho)$ be a metric space, $\mathbb{R}$ $(\mathbb{Z})$ be the
group of real (integer) numbers, $\mathbb{R_{+}}$
$(\mathbb{Z_{+}})$ be the semi-group of nonnegative real
(integer) numbers, $\mathbb S$ be one of the two sets $\mathbb{R}$
or $\mathbb{Z}$ and $\mathbb{T}\subseteq\mathbb{S}$
$(\mathbb{S_{+}}\subseteq\mathbb{T}$) be a sub-semigroup of the
additive group $\mathbb{S}$.

A \emph{dynamical system} is a triplet $(X,\mathbb{T},\pi)$, where $\pi:\mathbb{T}\times X\to X$
is a continuous mapping satisfying the following conditions:
\begin{equation}\label{eq1.0.1}
\pi(0,x)=x\ (\forall x\in X);\nonumber
\end{equation}
\begin{equation}\label{eq1.0.2}
\pi(s,\pi(t,x))=\pi(s+t,x)\ (\forall t,\tau \in\mathbb T\
\mbox{and}\ x\in X).\nonumber
\end{equation}

The function $\pi(\cdot,x):\mathbb{T} \to X$ is called a
\emph{motion} passing through the point $x$ at the moment $t=0$
and the set $\Sigma_x:= \pi(\mathbb{T},x)$ is called the
\emph{trajectory} of this motion.

A nonempty set $M\subseteq X$ is called \emph{positively
invariant} (\emph{negatively invariant}, \emph{invariant}) with
respect to the dynamical system $(X,\mathbb{T},\pi)$ or, simply,
positively invariant (negatively invariant, invariant), if
$\pi(t,M)\subseteq M$ $(M\subseteq \pi(t,M), \pi(t,M)=M)$  for
every $t\in \mathbb{T}$.

A closed positively invariant set, which does not contain any own
closed positively invariant subset, is called \emph{minimal}.

It is easy to see that every positively invariant minimal set is
invariant.

Let $M\subseteq X$. The set $$
\omega(M):=\bigcap\limits_{t\ge0}\overline{\bigcup
\limits_{\tau\ge t}\pi(\tau ,M)} $$ is called the
\emph{$\omega$-limit} of $M$.

The dynamical system $(X,\mathbb{T},\pi)$ is called:
\begin{list}{$-$}{}
\item \emph{point dissipative} if there exists a nonempty compact
subset $K\subseteq X$ such that for every $x\in X$
\begin{equation}\label{eq1.2.1}
\lim \limits_{t\to+\infty}\rho(\pi(t,x),K)=0;
\end{equation}
\item \emph{compact dissipative} if the equality (\ref{eq1.2.1})
takes place uniformly with respect to $x$ in any compact subset of $X$.
\end{list}

Let $(X,\mathbb T, \pi)$ be compact dissipative and $K$ be a
compact set attracting every compact subset from $X$. Let us set
\begin{equation}\label{eq1.2*}
 J_X:= \omega (K):= \bigcap \limits
_{t \ge 0} \overline { \bigcup \limits_{\tau \ge t} \pi({\tau},K) }.
\end{equation}

It can be shown \cite[Ch.I]{Che_2004} that the set $J_X$ defined by
equality (\ref{eq1.2*}) does not depend on the choice of the
attracting set $K$, but is characterized only by the properties of the
dynamical system $(X,\mathbb T, \pi)$ itself. The set $J_X$ is
called the \emph{Levinson center} of the compact dissipative
dynamical system $(X,\mathbb T, \pi)$.

More generally, a compact invariant set $J\subset X$ is called the Levinson
center of the compact dissipative dynamical system $(X,\mathbb T, \pi)$ if $J$
attracts every compact subset of $X$, which means that
\begin{equation*}
\lim \limits_{t\to+\infty}\rho(\pi(t,x),J)=0,
\end{equation*}
uniformly with respect to $x\in M$, and for all compact subset $M$ of $X$. It is
worth noticing that  this concept does not coincide, in general, with that of global
attractor (since the latter attracts the bounded subsets of $X$). For a more detailed
analysis on the relationship between these two concepts, see Cheban \cite{Che_2004}.

\subsection{Global attractor of cocycles}

Let $\mathbb T_{1}\subseteq
\mathbb T_{2}$ be two sub-semigroups of the group $\mathbb S$
($\mathbb S_{+}\subseteq \mathbb T_{1}$).

A triplet $\langle (X,\mathbb{T}_1,\pi),\,(Y,\mathbb{T}_2,\sigma),
\,h\rangle $, where $h$ is a homomorphism from
$(X,\mathbb{T}_1,\pi)$ onto $(Y,\mathbb{T}_2,\sigma)$ (i.e., $h$
is continuous and $h(\pi(t,x))=\sigma(t,h(x))$ for all
$t\in\mathbb T_{1}$ and $x\in X$), is called a
\textit{non-autonomous dynamical system}.

Let $(Y,\mathbb{T}_2,\sigma)$ be a dynamical system, $W$
a complete metric space, and $\varphi$ a continuous mapping from
$\mathbb{T}_1\times W\times Y$ into $W$, possessing the following
properties:
\begin{enumerate}
\item[a.] $\varphi(0,u,y)=u$ $(u\in W, y\in Y)$; \item[b.]
$\varphi(t+\tau,u,y)= \varphi(\tau,\varphi(t,u,y),\sigma(t,y))$
$(t,\tau\in\mathbb{T}_1,\, u\in W, y\in Y).$
\end{enumerate}
Then, the triplet $\langle W, \varphi,
(Y,\mathbb{T}_2,\sigma)\rangle $ (or shortly $\varphi$) is called
\cite{Sell71} \textit{a cocycle }on $(Y,\mathbb{T}_2,\sigma)$ with
fiber $W$.

Let $X:= W\times Y$ and let us define a mapping $\pi: X\times
\mathbb{T}_1\to X$ as follows:
$\pi((u,y),t):=(\varphi(t,u,y),\sigma(t,y))$ (i.e.,
$\pi=(\varphi,\sigma)$). Then, it is easy to see that
$(X,\mathbb{T}_1,\pi)$ is a dynamical system on $X$, which is
called a \textit{skew-product dynamical system} \cite{Sell71} and
$h=pr_2:X\to Y$ is a homomorphism from $(X,\mathbb{T}_1,\pi)$ onto
$(Y,\mathbb{T}_2,\sigma)$ and, hence, $\langle
(X,\mathbb{T}_1,\pi),\, (Y,\mathbb{T}_2,\sigma), h\rangle $ is a
non-autonomous dynamical system. Note that $pr_1$ and $pr_2$ denote the projection
mappings with respect to the first and second variables, i.e., $pr_1(w,y)=w$, $ pr_2(w,y)=y$ for $(w,y)\in X$.

Thus, if we have a cocycle $\langle W, \varphi, (Y,\mathbb{T}_2,
\sigma)\rangle $ on the dynamical system $(Y,\mathbb{T}_2,\sigma)$
with fiber $W$, then it generates a non-autonomous dynamical
system $\langle (X,\mathbb{T}_1,\pi),$\ $(Y,\mathbb{T}_2,\sigma),
h\rangle $ ($X:= W\times Y$), called non-autonomous dynamical
system generated by the cocycle $\langle W, \varphi,
(Y,\mathbb{T}_2,\sigma)\rangle $ on $(Y,\mathbb{T}_2,\sigma)$.

Non-autonomous dynamical systems (cocycles) play a very important
role in the study of non-autonomous evolutionary differential/difference
equations. Under appropriate assumptions, every non-autonomous
differential/difference equation generates a cocycle (a non-autonomous
dynamical system). Several examples can be found, for instance, in \cite{CC_2010}.

A family $ \{ I_{y} \ \vert \   y \in Y \} \ ( I_{y} \subset W
)$ of nonempty compact subsets of $W$ is called (see, for example,
\cite{Che_2004}) \emph{a compact pullback attractor}
(\emph{uniform pullback attractor}) of the cocycle $ \varphi $, if
the following conditions hold:
\begin{enumerate}
\item the set $ I := \bigcup \{ I_{y} \ \vert \   y \in Y \} $ is
relatively compact; \item the family $ \{ I_{y} \ \vert \   y \in
Y \} $ is invariant with respect to the cocycle $ \varphi $, i.e.,
$ \varphi (t, I_{y}, y ) = I_{\sigma (t,y )} $ for all $ t \in
\mathbb T_{+} $ and $ y \in Y $; \item for all $ y \in Y $
(uniformly in $y \in Y$) and $ K \in \mathcal{C}(W) $
$$
\lim \limits _{t \to + \infty } \beta (\varphi (t, K, \sigma(-t,y)
), I_{y}) =0 ,
$$
where $ \beta (A,B): = \sup \{ \rho (a,B) : a \in  A \} $ is the Hausdorff
semi-distance, and $\mathcal{C}(W)$ denotes de family of compact subsets of $W$.
\end{enumerate}

Below in this subsection we suppose that $\mathbb T_2=\mathbb S.$

A family $ \{ I_{y} \ \vert \   y \in Y \} ( I_{y} \subset W ) $
of nonempty compact subsets is called a \emph{compact global attractor}
of the cocycle\index{global attractor of the cocycle} $\varphi,$
if the following conditions are fulfilled:
\begin{enumerate}
\item the set $ I := \bigcup \{ I_{y} \ \vert \   y \in Y \} $ is
relatively compact; \item the family $ \{ I_{y} \ \vert \   y \in
Y \} $ is invariant with respect to the cocycle $ \varphi $; \item
the equality
$$
\lim \limits _{t \to + \infty } \sup \limits _{y \in Y } \beta
(\varphi (t,K,y),I)=0
$$
holds for every $K\in \mathcal{C}(W)$.
\end{enumerate}

Let $M \subseteq W$ and
\begin{equation}\label{eq2.7.3}
\omega_y(M):=\bigcap_{t\ge 0} \overline{\bigcup_{\tau\ge t}
\varphi(\tau,M,\sigma(-\tau,y))}\nonumber
\end{equation}
for all $y \in Y$.

A cocycle $\varphi $ over $(Y,\mathbb S,\sigma)$ with fiber
$W$ is said to be \emph{compact dissipative}, if there exits a nonempty
compact $K \subseteq W$ such that
\begin{equation}\label{eq2.7.8}
\lim_{t \to + \infty} \sup \{ \beta (\varphi(t,M,y),K) \ \vert \  y \in Y
\}=0
\end{equation}
for any  $M \in \mathcal{C}(W)$.

Recall that a function $F\in C(\mathbb T,\mathbb R)$ is said
\emph{to possess the $(S)$-property  (for example, periodicity,
almost periodicity, recurrence, asymptotically almost periodicity
and so on)}, if the motion $\sigma(\tau,F),$ generated by the
function $F$ in the shift dynamical system $(C(\mathbb T,\mathbb
R),\mathbb T,\sigma)$, possesses this property (see \cite[Ch. II]{Che_2004} for more details).

Then, we can now establish the following result which will be used in the proof of our main results in this paper.

\begin{theorem}\label{thCA}\cite[ChII]{Che_2004}
Let $Y$ be compact, $\langle W,\varphi,(Y,\mathbb S,\sigma)\rangle
$  compact dissipative, and $K$  the nonempty compact subset
of $W$ appearing in (\ref{eq2.7.8}). Then:
\begin{enumerate}
\item[1.] $I_y=\omega_y(K) \ne \emptyset $, is compact, $ I_y
\subseteq K$ and
$$
\lim_{t \to + \infty} \beta(\varphi(t,K,\sigma(-t,y)),I_y)=0
$$
for every $y \in Y$; \item[2.] $\varphi(t,I_y,y)=I_{\sigma(t,y)} $ for
all $y \in Y$ and $t \in \mathbb S_+$; \item[3.]
\begin{equation}\label{eq2.7.10}
\lim_{t \to + \infty}\beta(\varphi(t,M,\sigma(-t,y)),I_y)=0\nonumber
\end{equation}
for all $M \in \mathcal{C}(W)$ and $ y \in Y$ ; \item[4.]
\begin{equation}\label{eq2.7.11}
\lim _{t \to + \infty} \sup \{ \beta(\varphi(t,M,\sigma(-t,y)),I)\ \vert
\
 y \in Y \}=0\nonumber
\end{equation}
for any $M \in \mathcal{C}(W)$, where $I:=\cup \{ I_y \ \vert \ y \in Y \}
$; \item[5.] $ I_y=pr_1J_y$ for all $ y \in Y$, where $J$ is the
Levinson center of $(X,\mathbb T_+,\pi)$, and hence $I=pr_1J$;
\item[6.] the set $I$ is compact; \item[7.] the set $I$ is
connected if one of the next two conditions is fulfilled:
\begin{enumerate}
\item $\mathbb S_+=\mathbb R_+$ and the spaces $W$ and $Y$ are
connected; \item $\mathbb S_+=\mathbb Z_+$ and the space $W \times
Y$ possesses the $(S)$-property or it is connected and locally
connected.
\end{enumerate}
\end{enumerate}
\end{theorem}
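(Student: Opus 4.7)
The plan is to lift the cocycle to its skew-product $(X,\mathbb{T}_+,\pi)$ with $X=W\times Y$ and $\pi=(\varphi,\sigma)$, reduce the statement to properties of the Levinson center of a compact dissipative autonomous system, and recover the fiberwise information by intersecting with $h^{-1}(y)$. Concretely, I would first check that the skew-product is compact dissipative with attracting set $K\times Y$: since $Y$ is compact and $\pi$ preserves fibers, for any compact $M\subseteq X$ one has $\sup_{(u,y)\in M}\rho(\pi(t,u,y),K\times Y)\le \sup_{y\in Y}\beta(\varphi(t,pr_1(M),y),K)$, which tends to $0$ by (\ref{eq2.7.8}). Thus the classical autonomous theory yields a Levinson center $J_X=\omega(K\times Y)$ which is compact, invariant, attracts every compact subset of $X$, and is the maximal compact invariant subset of $X$.

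Next I would set $J_y:=J_X\cap h^{-1}(y)$ and prove $I_y=pr_1 J_y$. Nonemptiness of $J_y$ uses that $\mathbb T_2=\mathbb S$ is a group: for fixed $y$ and arbitrary $u_n\in K$, extract a convergent subsequence of $\pi(n,u_n,\sigma(-n,y))=(\varphi(n,u_n,\sigma(-n,y)),y)$; its limit lies in $J_X\cap h^{-1}(y)$. For the inclusion $pr_1 J_y\subseteq I_y$: the invariance of $J_X$ on group time provides, through each $(u,y)\in J_X$, a full orbit $\{(u_t,\sigma(t,y))\}_{t\in\mathbb S}\subseteq K\times Y$; the cocycle identity gives $u=\varphi(n,u_{-n},\sigma(-n,y))$ with $u_{-n}\in K$ for every $n\ge 0$, whence $u\in\bigcap_N\overline{\bigcup_{n\ge N}\varphi(n,K,\sigma(-n,y))}=\omega_y(K)=I_y$. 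For the reverse inclusion I form $\tilde I:=\bigcup_{y\in Y}I_y\times\{y\}$, establish $\tilde I\subseteq K\times Y$ (by the standard pullback argument using (\ref{eq2.7.8}) that any cluster point of $\varphi(t,K,\sigma(-t,y))$ lies in $K$), show $\tilde I$ is $\pi$-invariant from the cocycle calculation $\varphi(t,\omega_y(K),y)=\omega_{\sigma(t,y)}(K)$, and verify $\tilde I$ is closed (hence compact) by a diagonal argument exploiting that each $I_{y_n}$ consists of endpoints of full orbits in $K$. Maximality of $J_X$ then yields $\tilde I\subseteq J_X$, i.e.\ $I_y\subseteq pr_1 J_y$.

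Having identified $I_y=pr_1 J_y$, items 1, 5 and 6 are immediate: $I_y$ is compact, nonempty, contained in $K$, and $I=pr_1 J_X$ is compact. Item 2 is the cocycle calculation just used. The pullback attraction assertions 3 and 4 I would derive directly from (\ref{eq2.7.8}): if for some compact $M$ and $y$ the quantity $\beta(\varphi(t_n,M,\sigma(-t_n,y)),I_y)$ did not vanish, by compact dissipativity any limit point of $\varphi(t_n,u_n,\sigma(-t_n,y))$ lies in $K$, and by the same full-orbit argument as above it must lie in $\omega_y(K)=I_y$, contradiction; the uniform statement 4 follows because (\ref{eq2.7.8}) is already uniform in $y$ and the extraction above can be run with varying $y_n\to y$.

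The part I expect to be the main obstacle is item 7, the connectedness. Writing $J_X=\bigcap_{t\ge 0}F_t$ with $F_t=\overline{\bigcup_{\tau\ge t}\pi(\tau,K\times Y)}$, a decreasing intersection of compact connected sets is connected, so it suffices to show each $F_t$ is connected. In case (a), $W\times Y$ is connected and for continuous time $\bigcup_{\tau\ge t}\pi(\tau,K\times Y)$ is the image of the connected set $[t,\infty)\times K\times Y$ under $\pi$, hence connected, and so is its closure. In case (b) the union is only countable and this direct argument fails; here I would use the assumed $(S)$-property (or connectedness with local connectedness) to show that the orbit segments $\pi(\tau,K\times Y)$ for $\tau\ge t$ are chained through arbitrarily small neighborhoods, so that each $F_t$ is a continuum, again by the standard result that a nested intersection of continua in a compact Hausdorff space is a continuum. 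Making this chaining rigorous from the $(S)$-property, respectively from local connectedness, is the technical point I would expect to spend most effort on.
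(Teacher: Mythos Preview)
The paper does not give its own proof of this theorem: it is quoted verbatim from \cite[Ch.~II]{Che_2004} and no argument is supplied here. There is therefore nothing in the present paper to compare your proposal against.

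That said, your outline is precisely the standard route taken in the cited monograph: pass to the skew-product $(X,\mathbb T_+,\pi)$, observe that compact dissipativity of the cocycle together with compactness of $Y$ makes $K\times Y$ an attracting compact for $\pi$, invoke the autonomous Levinson-center theory to obtain $J_X$, and then read off the fiberwise statements via $J_y=J_X\cap X_y$ and $I_y=pr_1 J_y$. Your arguments for items 1--6 are the expected ones. Two small remarks. First, for the identification $I_y=pr_1 J_y$ you can shortcut the closedness check of $\tilde I=\bigcup_y I_y\times\{y\}$: once you have both inclusions $\tilde I\subseteq J_X$ (by maximality) and $J_X\subseteq \tilde I$ (since every point of $J_X$ lies on a full bounded trajectory, hence projects into the pullback $\omega$-limit of its fiber), equality $\tilde I=J_X$ gives compactness of $\tilde I$ for free. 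Second, your treatment of item~7 is on target: in continuous time the image of the connected set $[t,\infty)\times K\times Y$ under $\pi$ does the job, while in discrete time the argument genuinely requires the additional hypothesis (the $(S)$-property or local connectedness) and is the place where the nontrivial work sits; this is exactly how it is handled in \cite{Che_2004}.
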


\subsection{Non-Autonomous Dynamical Systems with Convergence and Weak Convergence}

First, let us recall (see \cite{Che_2004}) that a non-autonomous dynamical system
$\langle (X,
$ $\mathbb T_{1},$ $ \pi ),$ $ (Y,$ $ \mathbb T_{2},$ $\sigma ),$ $ h
\rangle $ is said to be \emph{convergent} if the following
conditions are fulfilled:
\begin{enumerate}
\item the dynamical systems $ (X, \mathbb T_{1} , \pi ) $ and $
(Y, \mathbb T_{2} ,\sigma ) $ are compact dissipative; \item the
set $ J_{X} \bigcap X_{y} $ contains no more than one point for
all $ y \in J_{Y} $, where $ X_{y} := h^{-1}(y):= \{ x \vert  x
\in X, h(x)=y \} $ and $ J_{X}$ (respectively, $J_{Y}$) is the
Levinson center of the dynamical system $ (X, \mathbb T_{1} , \pi
)$ (respectively, $(Y, \mathbb T_{2} ,\sigma ))$.
\end{enumerate}

Thus, a non-autonomous dynamical system $\langle(X,\mathbb
T_1,\pi),(Y,\mathbb T_2,\sigma),h\rangle$ is convergent, if the
systems $(X,\mathbb T_1,\pi)$ and $(Y,\mathbb T_2,\sigma)$ are
compact dissipative with Levinson centers $J_X$ and $J_Y$
respectively, and $J_X$ has  ``trivial" sections, i.e.,
$J_X\bigcap X_y$ consists of a single point for all $y\in J_Y$. In
this case, the Levinson center $J_X$ of the dynamical system
$(X,\mathbb T_1,\pi)$ is a copy (an homeomorphic image) of the
Levinson center $J_Y$ of the dynamical system $(Y,\mathbb
T_2,\sigma)$. Thus, the dynamics on $J_X$ is the same as on $J_Y$.

Before introducing the class of non-autonomous dynamical systems with weak convergence,
let us recall some definition concerning the different recurrence properties of
points and motions (see \cite{Che_2009} for more details).

Let $(X,\mathbb{T}, \pi)$ be a dynamical system.
Given $\varepsilon>0$, a number $\tau\in\mathbb{T}$ is called an
$\varepsilon-$\emph{shift} (respectively, an
$\varepsilon-$\emph{almost period}) of the point $x\in X$, if
$\rho(\pi(\tau,x),x)<\varepsilon$ (respectively, $\rho
(\pi(\tau+t,x),\pi(t,x))<\varepsilon$ for all $t\in\mathbb{T}$).

A point $x\in X$ is called \emph{almost recurrent} (respectively,
\emph{Bohr almost periodic}), if for any $\varepsilon>0$ there
exists a positive number $l$ such that in any segment of length
$l$ there is an $\varepsilon-$shift (respectively,
$\varepsilon-$almost period) of the point $x\in X$.

If the point $x\in X$ is almost recurrent and the set
$H(x):=\overline {\{\pi(t,x)\ \vert\ t\in\mathbb{T}\}}$ is
compact, then $x$ is called \emph{recurrent}, where by bar we
denote the closure in $X$.

Denote by $\mathfrak{N}_{x}:=\{\{t_{n}\}\subset\mathbb{T}%
\ :\ \mbox{such that}\ \{\pi(t_{n},x)\}\to x \ \mbox{and}\ \{t_{n}\}\to
\infty\}.$

A point $x\in X$ of the dynamical system $(X,\mathbb{T}, \pi)$ is
called \emph{Levitan almost periodic} \cite{Lev-Zhi}, if there
exists a dynamical system $(Y,\mathbb{T},\lambda)$ and a Bohr
almost periodic point $y\in Y$ such that
$\mathfrak{N}_{y}\subseteq\mathfrak{N}_{x}.$

A point $x\in X$ is called \emph{stable in the sense of Lagrange}
$(st.$L for short), if its trajectory $\{\pi(t,x)\ :\
t\in\mathbb{T}\}$ is relatively compact.

A point $x\in X$ is called \emph{almost automorphic}
\cite{Lev-Zhi} for the dynamical system $(X,\mathbb{T},\pi),$
if the following conditions hold:

\begin{enumerate}
\item $x$ is st.$L$;

\item there exists a dynamical system $(Y,\mathbb{T},\lambda),$ a homomorphism
$h$ from $(X,\mathbb{T},\pi)$ onto $(Y,\mathbb{T},\lambda),$ and an almost
periodic (in the sense of Bohr) point $y\in Y$ such that $h^{-1}(y)=\{x\}.$
\end{enumerate}

\begin{remark}
\label{r4.7}1. Every almost automorphic point $x\in X$ is also Levitan almost periodic.

2. A Levitan almost periodic point $x$ with relatively compact trajectory
$\{\pi(t,x):\ t\in T\}$ is also almost automorphic. In other
words, a Levitan almost periodic point $x$ is almost automorphic, if and only
if its trajectory $\{\pi(t,x):\ t\in T\}$ is relatively compact.

3. Let $(X,T,\pi)$ and $(Y,T,\lambda)$ be two dynamical systems, $x\in X$ and
the following conditions be fulfilled:

\begin{enumerate}
\item a point $y\in Y$ is Levitan almost periodic;

\item $\mathfrak{N}_{y}\subseteq \mathfrak{N}_{x}$.
\end{enumerate}

Then, the point $x$ is also Levitan almost periodic.

4. Let $x\in X$ be a st.$L$ point, $y\in Y$ be an almost
automorphic point and $\mathfrak{N}_{y}\subseteq
\mathfrak{N}_{x}$. Then, the point $x$ is almost automorphic too.
\end{remark}

Recall \cite{Che_2009} that a point $x\in X$ is called
\emph{asymptotically $\tau$--periodic} (respectively,
 \emph{asymptotically Bohr
almost periodic}, \emph{asymptotically recurrent}), if there exists
a $\tau$-periodic (respectively, Bohr almost periodic, recurrent)
point $p\in X$ such that $\lim\limits_{t\to
+\infty}\rho(\pi(t,x),\pi(t,p))=0$.

Now, we introduce the concept of non-autonomous dynamical
systems with weak convergence, which is very close to convergent systems, but possessing
a non-trivial global attractor. This means that this class of
non-autonomous systems will conserve almost all properties of
convergent systems, but will have a ``nontrivial" global attractor
$J_X$, i.e., there exists at least one point $y\in J_{Y}$ such
that the set $J_{X}\bigcap X_{y}$ contains more than one point.

A non-autonomous dynamical system $\langle(X,\mathbb
T_1,\pi),(Y,\mathbb T_2,\sigma),h\rangle$ is said to be \emph{weak
convergent}, if the following conditions hold:
\begin{enumerate}
\item the dynamical systems $(X,\mathbb T_1,\pi)$ and $(Y,\mathbb T_2,\sigma)$
are compact dissipative with Levinson centers $J_X$ and $J_Y$
respectively;
\item it follows that
\begin{equation}\label{eqw1}
\lim\limits_{t\to +\infty}\rho(\pi(t,x_1),\pi(t,x_2))=0,\nonumber
\end{equation}
for all $x_1,x_2 \in J_X$ with $h(x_1)=h(x_2)$.
\end{enumerate}

\begin{remark}
\emph{It is clear that every convergent non-autonomous dynamical
system is weak convergent. The inverse statement, generally
speaking, is not true. See \cite{CC_I} for a counterexample
confirming this statement.}
\end{remark}

\section{Analysis of Seifert's problem for dissipative difference equations}\label{sec5}

We can now analyze the questions related to Seifert's problem mentioned in the Introduction of this paper.
To be more precise, consider again the initial differential equation (\ref{eqI1*})
\begin{equation*}
x'=f(t,x),
\end{equation*}
where $f\in C(\mathbb R\times \mathbb R^n,\mathbb R^n)$. Assume
that the right-hand side of (\ref{eqI1*}) satisfies hypotheses
ensuring the existence, uniqueness and extendability of solutions
of (\ref{eqI1*}),  i.e., for all $(t_0,x_0)\in \mathbb R\times
\mathbb R^n$ there exists a unique solution $x(t;t_0,x_0)$ of
(\ref{eqI1*}) with initial data $t_0,x_0$, and defined for all
$t\geq t_0$.

Equation (\ref{eqI1*}) (respectively, the function $f$) is called
\emph{regular}, if for all $x_0\in \mathbb R^n$ and $g\in
H(f):=\overline{\{f_{\tau}:\ \tau\in \mathbb R\}}$ (where the bar
denotes the closure in the space $C(\mathbb R\times \mathbb
R^n,\mathbb R^n)$ and $f_{\tau}(t,x):=f(t+\tau,x)$ for all
$(t,x)\in\mathbb R\times \mathbb R^n$) the equation
\begin{equation}\label{eqI2}
x'=g(t,x)\nonumber
\end{equation}
has a unique solution $\varphi(t,x,g)$ passing through the point
$x_0$ at the initial moment $t=0$, and defined on $\mathbb
R_{+}:=\{t\in\mathbb R |\ t\ge 0\}$.

Then, the following result holds.

\begin{theorem}\label{thI1}\cite[ChII]{Che_2004} Suppose that $f\in C(\mathbb R\times \mathbb R^n,\mathbb
R^n)$ is regular and $H(f)$ is a compact subset of $C(\mathbb
R\times \mathbb R^n,\mathbb R^n)$. Then, the following statements
are equivalent:
\begin{enumerate}
\item equation (\ref{eqI1*}) is uniformly dissipative; \item there
exists a positive number $R_0$ such that
\begin{equation}\label{eqI3}
\limsup\limits_{t\to +\infty}|\varphi(t,x,g)|\le R_0
\end{equation}
for all $(x,g)\in\mathbb R^n\times H(f)$.
\end{enumerate}
\end{theorem}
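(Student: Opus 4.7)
The plan is to translate both statements into properties of the skew-product flow $\pi((x,g),t) := (\varphi(t,x,g),\sigma(t,g))$ on $X := \mathbb{R}^{n}\times H(f)$, using the identity
\begin{equation*}
x(t;t_{0},x_{0}) \;=\; \varphi(t-t_{0},\,x_{0},\,\sigma(t_{0},f)),
\end{equation*}
which follows from regularity of $f$. Under this correspondence, uniform dissipativity of (\ref{eqI1*}) is the statement that for each $r>0$ there exists $L(r)>0$ with $|\varphi(s,x_{0},\sigma(t_{0},f))|\le r_{0}$ whenever $|x_{0}|\le r$, $s\ge L(r)$, $t_{0}\in\mathbb R$; while (\ref{eqI3}) is the pointwise lim-sup bound for the cocycle over the hull.

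For the direction $(1)\Rightarrow(2)$, I would fix $g\in H(f)$ and $x_{0}\in\mathbb R^{n}$, pick $r\ge |x_{0}|$, and write $g=\lim_{n\to\infty}\sigma(t_{n},f)$ (possible by the very definition of $H(f)$ as the closure of shifts of $f$). For any $s\ge L(r)$, uniform dissipativity gives $|\varphi(s,x_{0},\sigma(t_{n},f))|\le r_{0}$ for every $n$. Joint continuity of $\varphi$ (a consequence of regularity and Theorem \ref{thCA}-type compactness of $H(f)$) yields in the limit $|\varphi(s,x_{0},g)|\le r_{0}$. Taking $s\to\infty$ and letting $R_{0}:=r_{0}$ gives (2).

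For the harder direction $(2)\Rightarrow(1)$, I would set $r_{0}:=R_{0}+1$ and argue that $\pi$ is compact dissipative on $X$ with attracting set $K:=\overline{B(0,r_{0})}\times H(f)$. Hypothesis (2) says directly that for every $(x,g)\in X$ the positive semi-orbit $\{\pi(t,(x,g)):t\ge 0\}$ eventually lies in $K$; together with continuity of $\varphi$ on finite intervals this forces every positive semi-orbit to be relatively compact in $X$, so $\pi$ is point dissipative in the sense of (\ref{eq1.2.1}). Since $X$ is locally compact (the fiber $\mathbb R^{n}$ is locally compact and the base $H(f)$ is compact), one can now apply the structural results of \cite[ChII]{Che_2004} on dissipative skew-product flows: under local compactness of the phase space and compactness of the driving system, point dissipativity is equivalent to compact dissipativity. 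Compact dissipativity of $\pi$, read back through the cocycle identity above, is exactly uniform dissipativity of (\ref{eqI1*}).

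The main obstacle is the compactness step inside $(2)\Rightarrow(1)$. A purely ``hands-on'' attempt---extracting from a putative counterexample convergent subsequences $x_{n}\to x^{*}$, $g_{n}\to g^{*}$, $s_{n}\to\infty$ with $|\varphi(s_{n},x_{n},g_{n})|>r_{0}$, and using $\limsup_{t}|\varphi(t,x^{*},g^{*})|\le R_{0}$ together with continuity to propagate bounds to the sequence---only controls $\varphi(t,x_{n},g_{n})$ on a \emph{fixed} finite window $[0,T^{*}]$, whereas one needs control all the way out to $t=s_{n}$. Bridging this gap requires iterating the continuity argument through successive windows while keeping the intermediate states in a compact set; this is precisely what the local-compactness + compact-base machinery in \cite[ChII]{Che_2004} (invoked to pass from point to compact dissipativity) accomplishes, and is the step I would cite rather than redo.
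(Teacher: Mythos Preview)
The paper does not contain a proof of Theorem~\ref{thI1}: the result is quoted verbatim from \cite[Ch.~II]{Che_2004} and used as a black box (see the citation attached to the theorem statement and the remark immediately following it). So there is no in-paper argument to compare your proposal against.

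That said, your outline is in the right spirit and matches the philosophy of \cite{Che_2004}. The direction $(1)\Rightarrow(2)$ is straightforward as you wrote it: density of the orbit of $f$ in $H(f)$ together with joint continuity of $\varphi$ lets you push the uniform bound to every $g\in H(f)$. For $(2)\Rightarrow(1)$ you correctly isolate the only nontrivial step---upgrading point dissipativity of the skew-product on the locally compact space $X=\mathbb R^{n}\times H(f)$ to compact dissipativity---and you are right that this is exactly the content of the general structural results in \cite[Ch.~I--II]{Che_2004} rather than something to redo by hand. One small point worth making explicit in the final sentence: compact dissipativity of the cocycle gives, for each compact $M\subset\mathbb R^{n}$, a time $L(M)$ such that $\sup_{y\in Y}\beta(\varphi(t,M,y),K)\to 0$; since $\{\sigma(t_{0},f):t_{0}\in\mathbb R\}\subset H(f)=Y$, the required uniformity in $t_{0}$ for uniform dissipativity of (\ref{eqI1*}) is automatic. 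With that clarification the translation back is indeed immediate.
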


At light of Theorem \ref{thI1}, it is said that equation
(\ref{eqI1*}) is \emph{dissipative} (in fact the family of equations
(\ref{eqI3}) is collectively dissipative, but we use this shorter
terminology) if (\ref{eqI3}) holds.

\begin{remark}\label{remS1} Let $\langle \mathbb R^n,\varphi, (Y,\mathbb
R,\sigma)\rangle$ ($Y=H(f):=\overline{\{f_{\tau}:\ \tau\in\mathbb
R\}}$ and $(Y,\mathbb R,\sigma)$ is the shift dynamical system on
$Y$) be the cocycle generated by the differential equation
(\ref{eqI1*}). If $H(f)$ is a compact subset of $C(\mathbb
R\times \mathbb R^n,\mathbb R^n))$, then (\ref{eqI1*}) is
dissipative if and only if the cocycle $\varphi$, generated by
(\ref{eqI1*}), is compact dissipative.
\end{remark}

Now, before establishing and proving our main results about Seifert's problem in the
case of difference equations (or discrete non-autonomous dynamical systems),
we need some results on discretization which we will consider in the next subsection.

\subsection{Discretization of NAS with continuous time}\label{sec4}

Let $\mathbb T=\mathbb R_{+}$ or $\mathbb R$. Consider a
non-autonomous dynamical system $\langle (X,\mathbb
T,\tilde{\pi}),$ $ (Y,\mathbb T,\tilde{\sigma}),$ $h \rangle$ with
continuous time $\mathbb T$ and denote by $\mathcal T:=\mathbb T
\bigcap \mathbb Z$.

The non-autonomous dynamical system $\langle (X,\mathcal T,\pi),
(Y,\mathcal T,\sigma),h \rangle$ with discrete time $\mathcal T$
is called the discretization of $\langle (X,\mathbb
T,\tilde{\pi}), (Y,\mathbb T,\tilde{\sigma}),h \rangle$, if the
following conditions are fulfilled:
\begin{enumerate}
\item $\pi(t,x)=\tilde{\pi}(t,x)$ for all $(t,x)\in \mathcal
T\times X$; \item $\sigma(t,x)=\tilde{\sigma}(t,x)$ for all
$(t,x)\in \mathcal T\times Y$.
\end{enumerate}

\begin{lemma}\label{lD1} Let $(X,\mathbb R_{+},\tilde{\pi})$ be an
autonomous dynamical system with continuous time $\mathbb R_{+}$
and $(X,\mathbb Z_{+},\pi)$ be its discretization. Then, the
following statements hold:
\begin{enumerate}
\item If $\tilde{\gamma}:\mathbb R\mapsto X$ is an entire
trajectory of $(X,\mathbb R_{+},\tilde{\pi})$, then the mapping
$\gamma :\mathbb Z\to X$ defined by the equality
$\gamma(n):=\tilde{\gamma}(n)$ (for all $n\in\mathbb Z$) is an
entire trajectory of $(X,\mathbb Z_{+},\pi)$; \item If
$\gamma:\mathbb Z\mapsto X$ is an entire trajectory of $(X,\mathbb
Z_{+},\pi)$, then the mapping $\tilde{\gamma} :\mathbb R\mapsto X$
defined by the equality
\begin{equation}\label{eqD1}
\tilde{\gamma}(t):=\tilde{\pi}(\{t\},\gamma([t]))\ (\forall\ \
t\in\mathbb R)
\end{equation}
is an entire trajectory of $(X,\mathbb R_{+},\tilde{\pi}),$ where
$[t]\in\mathbb Z$ is the entire part of the number $t$ and
$\{t\}\in [0,1)$ is its fractional part.
\end{enumerate}
\end{lemma}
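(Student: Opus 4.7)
The plan is to verify, in each case, the defining properties of an entire trajectory: (i) that the map is defined on all of $\mathbb{R}$ (respectively $\mathbb{Z}$), (ii) that it is continuous, and (iii) that it satisfies the semigroup relation with the flow.

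For statement (1), I would simply restrict $\tilde{\gamma}$ to $\mathbb{Z}$ and observe that for $n \in \mathbb{Z}$ and $k \in \mathbb{Z}_+$, $\gamma(n+k) = \tilde{\gamma}(n+k) = \tilde{\pi}(k,\tilde{\gamma}(n)) = \pi(k,\gamma(n))$, where the last equality uses the defining property $\pi(t,x) = \tilde{\pi}(t,x)$ on $\mathcal{T}\times X$ from the definition of discretization. This part is essentially immediate.

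For statement (2), the key identity to establish is $\tilde{\gamma}(t+\tau) = \tilde{\pi}(\tau,\tilde{\gamma}(t))$ for all $t\in\mathbb{R}$, $\tau\in\mathbb{R}_+$. The plan is to unpack both sides using the definition \eqref{eqD1}: the left side equals $\tilde{\pi}(\{t+\tau\},\gamma([t+\tau]))$, while the right equals $\tilde{\pi}(\tau+\{t\},\gamma([t]))$ by the semigroup property of $\tilde{\pi}$. Setting $k := [t+\tau]-[t] \in \mathbb{Z}_+$, one checks the arithmetic identity $\tau+\{t\} = k + \{t+\tau\}$, which allows splitting $\tilde{\pi}(\tau+\{t\},\gamma([t])) = \tilde{\pi}(\{t+\tau\},\tilde{\pi}(k,\gamma([t])))$. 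Since $\tilde{\pi}(k,\cdot) = \pi(k,\cdot)$ on $\mathbb{Z}_+$ and $\gamma$ is an entire trajectory of the discrete system, $\pi(k,\gamma([t])) = \gamma([t]+k) = \gamma([t+\tau])$, closing the identity.

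The one subtle point — and what I would flag as the main obstacle — is continuity of $\tilde{\gamma}$ at integer values of $t$, since the definition \eqref{eqD1} is piecewise and switches at each integer. The right limit at $t=n$ is handled by $\{t\}\to 0^+$, giving $\tilde{\pi}(0,\gamma(n))=\gamma(n)$. The left limit requires a short argument: as $t\to n^-$ one has $[t] = n-1$ and $\{t\}\to 1^-$, so by continuity of $\tilde{\pi}$ the limit equals $\tilde{\pi}(1,\gamma(n-1)) = \pi(1,\gamma(n-1)) = \gamma(n)$, where the final equality uses that $\gamma$ is an entire trajectory of the discrete system. Continuity on each open interval $(n,n+1)$ follows directly from joint continuity of $\tilde{\pi}$, so the piecewise definition glues to a continuous map on $\mathbb{R}$, completing the verification.
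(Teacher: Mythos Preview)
Your proof is correct and follows essentially the same route as the paper's: both verify the semigroup identity $\tilde{\gamma}(t+\tau)=\tilde{\pi}(\tau,\tilde{\gamma}(t))$ directly by unpacking the floor/fractional-part definition, with the paper doing a two-case split on whether $\{t\}+\{\tau\}<1$ while you handle both cases at once via $k:=[t+\tau]-[t]$. Your explicit check of continuity at the integers is an addition the paper omits; it is correct and makes the argument more complete, though one could also note that continuity follows automatically once the semigroup identity is in hand, since for any $s_0<t_0$ one has $\tilde{\gamma}(t)=\tilde{\pi}(t-s_0,\tilde{\gamma}(s_0))$ on $(s_0,\infty)$.
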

\begin{proof} The first statement of Lemma is trivial. To prove the second statement we need to verify
that the equality
\begin{equation}\label{eqD0}
\tilde{\gamma}(t+\tau)=\tilde{\pi}(t,\tilde{\gamma}(\tau))
\end{equation}
holds for all $t\in\mathbb R_{+}$ and $\tau\in \mathbb R$. Let now $t\in
\mathbb R_{+}$, $\tau\in\mathbb R$ and $\tilde{\gamma}:\mathbb
R\to X$ be the mapping defined by (\ref{eqD1}), then we have
\begin{equation}\label{eqD2}
\tilde{\pi}(t,\tilde{\gamma}(\tau))=\tilde{\pi}([t]+\{t\},\tilde{\gamma}([\tau]+\{\tau\}))=\tilde{\pi}(\{t\}+\{\tau\},\gamma([t]+[\tau]).
\end{equation}
Logically, two cases are possible:

1. $\{t\}+\{\tau\}\in [0,1)$: Then, from (\ref{eqD2}) it follows
(\ref{eqD0}).

2. $\{t\}+\{\tau\}=1+r,$ where $r\in [0,1)$ and, consequently,
$r=\{t+\tau\}$ and $[t+\tau]=[t]+[\tau]+1$: Then, from (\ref{eqD2})
we have
\begin{eqnarray}\label{eqD3}
\tilde{\pi}(t,\tilde{\gamma}(\tau))&=&\tilde{\pi}(\{t\}+\{\tau\},\gamma([t]+[\tau])\\
&=&\tilde{\pi}(r+1,\gamma([t]+[\tau]))\nonumber
\\
&=& \tilde{\pi}(r,\gamma(1+[t]+[\tau]))\notag\\
&=&\tilde{\gamma}(t+\tau).\notag
\end{eqnarray}
Lemma is proved.
\end{proof}

Let $(X,\mathbb R_{+},\tilde{\pi})$ be a semi-flow and
$\tilde{\gamma}$ be an entire trajectory, then $\gamma :\mathbb
Z\mapsto X$, defined by the equality
$\gamma(n):=\tilde{\gamma}(n)$ for all $n\in\mathbb Z$ is called
\emph{discretization} of $\tilde{\gamma}$.

\begin{lemma}\label{lD2} Suppose that the following conditions are fulfilled:
\begin{enumerate}
\item $(X,\mathbb R_{+},\tilde{\pi})$ is an autonomous dynamical
system with continuous time $\mathbb R_{+}$ and $(X,\mathbb
Z_{+},\pi)$ is its discretization; \item $\tilde{\gamma}:\mathbb
R\mapsto X$ is an entire trajectory of $(X,\mathbb
R_{+},\tilde{\pi})$ and $\gamma$ is its discretization.
\end{enumerate}

Then the trajectory $\tilde{\gamma}$ is almost periodic with
respect to $(X,\mathbb R_{+},\tilde{\pi})$ if and only if $\gamma$
is almost periodic with respect to $(X,\mathbb Z_{+},\pi)$.
\end{lemma}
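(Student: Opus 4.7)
The plan is to use different techniques for the two implications. The forward direction is handled cleanly via Bochner's compactness criterion, while the backward direction admits a direct almost-period argument because integer shifts interact nicely with the decomposition $t=[t]+\{t\}$ appearing in formula (\ref{eqD1}).

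\emph{Forward direction.} Suppose $\tilde{\gamma}$ is almost periodic. By Bochner's criterion, the family of shifts $\{\tilde{\gamma}_{\tau}:\tau\in\mathbb{R}\}$ is relatively compact in the sup metric on the space of bounded continuous maps $\mathbb{R}\to X$. The operator of restriction to $\mathbb{Z}$ is nonexpansive with respect to the respective sup metrics, hence it sends relatively compact sets to relatively compact sets. Since, by Lemma \ref{lD1}, $\tilde{\gamma}_{n}|_{\mathbb{Z}}=\gamma_{n}$ for every $n\in\mathbb{Z}$, the family $\{\gamma_{n}:n\in\mathbb{Z}\}$ is relatively compact, and Bochner's criterion on $\mathbb{Z}$ yields almost periodicity of $\gamma$.

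\emph{Backward direction.} Assume $\gamma$ is almost periodic, so that its orbit closure $K:=\overline{\{\gamma(m):m\in\mathbb{Z}\}}\subset X$ is compact. Fix $\varepsilon>0$. By uniform continuity of $\tilde{\pi}$ on the compact set $[0,1]\times K$, choose $\delta>0$ such that $\rho(y_{1},y_{2})<\delta$ and $s\in[0,1]$ imply $\rho(\tilde{\pi}(s,y_{1}),\tilde{\pi}(s,y_{2}))<\varepsilon$. By almost periodicity of $\gamma$ there exists $L>0$ such that every integer interval of length $L$ contains an integer $\delta$-almost period $n$, i.e.
\begin{equation*}
\sup_{k\in\mathbb{Z}}\rho(\gamma(k+n),\gamma(k))<\delta.
\end{equation*}
For such $n\in\mathbb{Z}$ we have $\{t+n\}=\{t\}$ and $[t+n]=[t]+n$, so formula (\ref{eqD1}) gives
\begin{equation*}
\rho(\tilde{\gamma}(t+n),\tilde{\gamma}(t))=\rho\bigl(\tilde{\pi}(\{t\},\gamma([t]+n)),\tilde{\pi}(\{t\},\gamma([t]))\bigr)<\varepsilon
\end{equation*}
for every $t\in\mathbb{R}$, so $n$ is a real $\varepsilon$-almost period of $\tilde{\gamma}$. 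Since such integers are relatively dense in $\mathbb{Z}$ with gap $L$, every real interval of length $L+1$ contains one, and $\tilde{\gamma}$ is almost periodic.

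\emph{Main obstacle.} The subtlety lies in the forward direction: a purely direct almost-period argument would need to produce integer almost periods of $\tilde{\gamma}$ close to given real almost periods, and in its naive form it only yields integers within distance $\tfrac{1}{2}$, which is too coarse for the uniform-continuity estimate. One could repair this by a common-almost-period argument using an auxiliary integer-detecting function, but Bochner's criterion sidesteps the issue entirely by exploiting that relative compactness is preserved under the nonexpansive restriction map.
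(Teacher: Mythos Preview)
Your proof is correct and follows essentially the same route as the paper: Bochner's criterion (subsequence extraction) for the forward direction, and a direct $\varepsilon$--$\delta$ almost-period argument via the identity $\tilde{\gamma}(t)=\tilde{\pi}(\{t\},\gamma([t]))$ for the backward direction. Your version is slightly more careful in the backward direction, since you justify the needed uniform continuity of $\tilde{\pi}$ by restricting to the compact orbit closure $K$, whereas the paper simply invokes ``integral continuity'' of the flow without making that restriction explicit.
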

\begin{proof} Let $\tilde{\gamma}$ be an almost periodic motion of
$(X,\mathbb R_{+},\tilde{\pi}))$ and $\{n_k^{'}\}\subseteq \mathbb
Z$ be an arbitrary sequence of entire numbers. Consider the
sequence $\{\gamma(n+n_k^{'})\}_{n\in\mathbb Z}$. Since the
trajectory $\tilde{\gamma}$ is almost periodic with respect to
$(X,\mathbb R_{+},\tilde{\pi})$, then from the functional sequence
$\{\tilde{\gamma}(t+n_k^{'})\}$ ($t\in\mathbb R$) we can extract a
subsequence $\{\tilde{\gamma}(t+n_k)\}$ ($t\in\mathbb R$) which is
uniformly convergent  with respect to $t\in\mathbb R$. In
particular we have
\begin{equation}\label{eqD4}
\sup\limits_{t\in\mathbb
R}\rho(\tilde{\gamma}(t+n_{k}),\tilde{\gamma}(t+n_{l}))\to 0
\end{equation}
as $k,l\to +\infty$. Taking into account (\ref{eqD4}) we have
\begin{eqnarray}\label{eqD5}
 \sup\limits_{t\in\mathbb
Z}\rho(\gamma(t+n_{k}),\gamma(t+n_{l}))&=&\sup\limits_{t\in\mathbb
Z}\rho(\tilde{\gamma}(t+n_{k}),\tilde{\gamma}(t+n_{l})) \\
&\le& \sup\limits_{t\in\mathbb
R}\rho(\tilde{\gamma}(t+n_{k}),\tilde{\gamma}(t+n_{l}))\to 0  \nonumber
\end{eqnarray}
as $k,l\to +\infty$. Since the space $X$ is complete, then the
sequence $\{\gamma(n+n_k^{'})\}_{n\in\mathbb Z}$ converges
uniformly with respect to $n\in\mathbb Z$ and, consequently,
$\gamma :\mathbb Z\mapsto X$ is almost periodic.

As for the converse statement, let $\gamma :\mathbb Z\mapsto X$ be
an almost periodic motion of $(X,\mathbb Z_{+},\pi)$ and
$\varepsilon >0$  an arbitrary positive number. We choose a
number $\delta =\delta (\varepsilon)>0$ from the integral
continuity of the dynamical system $(X,\mathbb
R_{+},\tilde{\pi})$, i.e., such that $\rho(x_1,x_2)<\delta$
implies $\rho(\tilde{\pi}(t,x_1),\tilde{\pi}(t,x_2))<\varepsilon$,
for all $t\in [0,1]$. For the number $\delta$, we can choose a
relatively dense subset $\mathcal P_{\delta}\subseteq \mathbb Z$
such that
\begin{equation}\label{eqD6}
\rho(\gamma(n+\tau),\gamma(n))<\delta
\end{equation}
for all $n\in\mathbb Z$ and $\tau\in\mathcal P_{\delta}$. Then we
have
\begin{equation}\label{eqD7}
\rho(\tilde{\gamma}(t+\tau),\tilde{\gamma}(t))=\rho(\tilde{\pi}(\{t\},
\gamma([t]+\tau)),\tilde{\pi}(\{t\},\gamma([t]))<\varepsilon
\end{equation}
because $\rho(\gamma([t]+\tau),\gamma([t])<\delta$ for all
$t\in\mathbb R$ and $\tau\in\mathcal P_{\delta}\subseteq \mathbb
Z\subseteq \mathbb R$. Thereby, for arbitrary $\varepsilon >0$ we
find a relatively dense subset $\mathcal
P_{\delta(\varepsilon)}\subseteq \mathbb R$ of
$\varepsilon$--almost periods of the motion $\tilde{\gamma}$. This
means its almost periodicity, and the lemma is proved.
\end{proof}

\begin{remark}\label{rem11} Note that Lemma \ref{lD2} is close to
Theorem 1.27 \cite[ChI,p.47]{Cor}, but does not follow from this
statement.
\end{remark}

\begin{coro}\label{corD1} Let $(X,\mathbb R_{+},\tilde{\pi})$ be a
semi-flow without almost periodic motions. Then, its discretization
$(X,\mathbb Z_{+},\pi)$ does not possess any almost periodic motion.
\end{coro}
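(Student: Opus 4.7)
The plan is to argue by contraposition: I will show that any almost periodic motion of the discretization $(X,\mathbb Z_{+},\pi)$ must lift to an almost periodic motion of the continuous-time semi-flow $(X,\mathbb R_{+},\tilde{\pi})$, which together with the hypothesis yields the conclusion.

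First I would suppose, for contradiction, that $(X,\mathbb Z_{+},\pi)$ admits an almost periodic motion $\pi(\cdot,x_{0})$ through some point $x_{0}\in X$. Since Bohr almost periodicity forces the forward orbit closure $H(x_{0})$ to be compact and the motion to be recurrent, a standard construction (using a sequence $n_{k}\to+\infty$ with $\pi(n_{k},x_{0})\to x_{0}$, extracting diagonal subsequences on the relatively compact orbit closure) produces an entire trajectory $\gamma:\mathbb Z\to X$ of $(X,\mathbb Z_{+},\pi)$ with $\gamma(0)=x_{0}$, and $\gamma$ is again almost periodic (since $H(x_{0})$ is a minimal set on which the time-one map is a homeomorphism).

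Next I would apply Lemma \ref{lD1}(ii) to $\gamma$ to define $\tilde{\gamma}(t):=\tilde{\pi}(\{t\},\gamma([t]))$; by that lemma $\tilde{\gamma}$ is an entire trajectory of $(X,\mathbb R_{+},\tilde{\pi})$ whose discretization coincides with $\gamma$. Lemma \ref{lD2} then asserts that $\tilde{\gamma}$ is almost periodic with respect to the semi-flow if and only if its discretization $\gamma$ is almost periodic with respect to $(X,\mathbb Z_{+},\pi)$. Since the latter holds by construction, $\tilde{\gamma}$ is an almost periodic motion of $(X,\mathbb R_{+},\tilde{\pi})$, contradicting the hypothesis.

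The main (and essentially the only) obstacle is the lifting step from a one-sided almost periodic motion at time $n\geq 0$ to an entire almost periodic trajectory at the discrete level. This is a classical consequence of the fact that the trajectory closure of a Bohr almost periodic point is a compact minimal set on which the time-one map is invertible, so every point therein admits an entire trajectory; it can be either cited from standard sources on recurrent motions or sketched in a few lines. Once this step is in place, the remainder reduces to a direct combination of Lemmas \ref{lD1} and \ref{lD2}.
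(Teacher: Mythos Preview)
Your proof is correct. The paper states the corollary without proof, treating it as immediate from Lemma \ref{lD2}. The most direct reading is that the argument for the ``if'' direction of Lemma \ref{lD2} applies verbatim to \emph{forward} motions: if $n\mapsto\pi(n,x_0)$ is Bohr almost periodic on $\mathbb Z_+$, then using the identity $\tilde{\pi}(t,x_0)=\tilde{\pi}(\{t\},\pi([t],x_0))$ together with the same integral-continuity estimate one sees that $t\mapsto\tilde{\pi}(t,x_0)$ is Bohr almost periodic on $\mathbb R_+$. No passage to entire trajectories is required for this implication.

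Your route---first extending the discrete forward motion to an entire almost periodic trajectory $\gamma:\mathbb Z\to X$, then invoking Lemmas \ref{lD1}(ii) and \ref{lD2} exactly as stated---is also valid and has the merit of matching the lemma's hypotheses precisely. The cost is the additional lifting step, which (as you note) rests on the fact that the orbit closure of a Bohr almost periodic point is a compact equicontinuous minimal set on which the time-one map is a homeomorphism. The paper's implicit route sidesteps this entirely by observing that the relevant half of the proof of Lemma \ref{lD2} never actually uses negative times; what you identify as ``the main obstacle'' is therefore an artifact of insisting on applying the lemma literally rather than its proof.
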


\subsection{A negative answer: One-dimensional almost periodic dissipative difference
equation without almost periodic solutions}

Now we investigate the following interesting question.

\textbf{Problem.}(\emph{Seifert's problem for almost periodic
dynamical systems}) Suppose that the following conditions are
fulfilled:
\begin{enumerate}
\item $\langle W,\varphi, (Y,\mathbb T,\sigma)\rangle$ is a
compact dissipative cocycle with continuous ($\mathbb{T}=\mathbb
R_{+}$ or $\mathbb R$) or discrete ($\mathbb{T}=\mathbb Z_{+}$ or
$\mathbb Z$) time; \item $(Y,\mathbb T,\pi)$ is an almost periodic
minimal set.
\end{enumerate}

Does the skew-product dynamical system $(X,\mathbb T_{+},\pi)$,
generated by the cocycle $\varphi$ ($X=W\times Y$,
$\pi=(\varphi,\sigma)$ and $\mathbb T_{+}:=\{t\in \mathbb T:\ t\ge
0\}$) possess an almost periodic motion?

Fink and Fredericson \cite{FF_1971} and Zhikov \cite{Zhi_1972}
established that, in general,  even when (\ref{eqI1*}) is scalar,
the answer to Seifert's question is negative. Namely, in these
works they constructed a differential equation
\begin{equation}\label{eqSP1}
x'=f(\tilde{\sigma}(t,y),x),\ \ (x\in\mathbb R,\ y\in \mathcal
T^{2})
\end{equation}
where $\mathcal T^{2}$ is a two-dimensional torus, $(\mathcal
T^{2},\mathbb R,\tilde{\sigma})$ is an almost periodic minimal
dynamical system (in fact this is an irrational winding of a
two-dimensional torus $\mathcal T^{2}$) and $f\in C(\mathcal
T^2\times \mathbb R,\mathbb R)$ with the following properties:
\begin{enumerate}
\item the cocycle generated by (\ref{eqSP1}) is dissipative; \item
the skew-product dynamical system generated by (\ref{eqSP1}) does
not possess any almost periodic motions.
\end{enumerate}

Below we will prove that there exists a one-dimensional
($W=\mathbb R$) almost periodic compact dissipative cocycle
$\varphi$ with discrete time for which the corresponding
skew-product dynamical system $(X,\mathbb Z_{+},\pi)$ does not
have any almost periodic motions. To this end we denote by
$\langle \mathbb R,\tilde{\varphi},(\mathcal T^2,\mathbb
R,\tilde{\sigma})\rangle$ the cocycle generated by (\ref{eqSP1}).
Then we have
\begin{equation}\label{eqSP2}
\tilde{\varphi}(t,u,y)=u+\int_{0}^{t}f(\tilde{\sigma}(\tau,y),\tilde{\varphi}(\tau,u,y))d\tau.
\end{equation}
Denote by $(\mathcal T^2,\mathbb Z,\sigma)$ the discretization of the
dynamical system $(\mathcal T^2,\mathbb Z,\sigma)$, and let $y_0\in
\mathcal T^2$ be an arbitrary point. Then, by Lemma \ref{lD2}, the
trajectory of the point $y_0$ is almost periodic and,
consequently, the set $Y=H(y_0):=\overline{\{\sigma(n,y_0):\ n\in
\mathbb Z\}}$ is a compact minimal set of the dynamical system
$(\mathcal T^2,\mathbb Z,\sigma))$ consisting of  almost
periodic motions. Now, we define a mapping $\varphi:\mathbb Z\times
\mathbb R\times Y\to \mathbb R $ by the equality $
\varphi(n,u,y):=\tilde{\varphi}(n,u,y)$ and, consequently, we
obtain
\begin{equation}\label{eqSP3}
\varphi(n,u,y)=
u+\int_{0}^{1}f(\tilde{\sigma}(\tau,\sigma(n-1,y),\tilde{\varphi}(\tau,\varphi(n-1,u,y),\sigma(n-1,y)))d\tau.
\end{equation}
Thus, $\varphi(n,u,y)$ is a solution of the difference equation
\begin{equation}\label{eqSP4}
y(t+1)=F(\sigma(t,y),y(t)), \ \ (t\in \mathbb Z,\ y\in Y=H(y_0))
\end{equation}
where $F\in C(Y\times \mathbb R,\mathbb R)$ is defined by
\begin{equation}\label{eqSP5}
F(y,u):=u+\int_{0}^{1}f(\tilde{\sigma}(\tau,y),\tilde{\varphi}(\tau,u,y))d\tau
.
\end{equation}
Since the cocycle $\tilde{\varphi}$, generated by equation
(\ref{eqSP1}), is dissipative by its construction, then the
cocycle $\langle \mathbb R,\varphi,(Y,\mathbb Z,\sigma)\rangle$ is
also dissipative. Now, consider the non-autonomous dynamical system
$\langle (X,\mathbb Z,\pi),(Y,\mathbb Z,\sigma),h\rangle$
generated by the cocycle $\varphi$ (i.e., $X:=\mathbb R\times Y,$ $\pi
:=(\varphi,\sigma)$ and $h=pr_2:X\to Y$). Note that the
dynamical system $(X,\mathbb Z,\pi)$ does not possess any almost
periodic motion. In fact, if we suppose that it is not true, then
there exists a point $(\bar{u},\bar{y})\in X=\mathbb R\times Y$
such that the motion
$(\varphi(n,\bar{u},\bar{y}),\sigma(n,\bar{y}))$ is almost periodic
with respect to the dynamical system $(X,\mathbb Z,\pi)$ and,
consequently, with respect to the dynamical system
$(\tilde{X},\mathbb Z,\sigma)$ (where $\tilde{X}:=\mathbb R\times
\mathcal T^2$) because $(X,\mathbb Z,\pi)$ is a subsystem of
$(\tilde{X},\mathbb Z,\sigma)$. Finally, we note that
$(\tilde{X},\mathbb Z,\sigma))$ is the discretization of the
skew-product dynamical system generated by (\ref{eqSP1}). The
obtained contradiction proves our statement. Thus (\ref{eqSP4}) is
a one-dimensional almost periodic dissipative difference equation
without almost periodic solutions.

\subsection{A positive answer: Almost periodic solutions of almost periodic dissipative
difference equations}\label{sec6}

Let $(Y,\mathbb Z_{+},\sigma)$ be a dynamical system on the metric
space $Y$. In this subsection we prove a positive answer to Seifert's
Problem assuming additional assumptions on our difference equations. First, we suppose that $Y$ is a compact
space. Consider a difference equation
\begin{equation}\label{eqA1}
u(t+1)=f(\sigma(t,y),u(t)) \ \ \ (t\in\mathbb Z_{+},\ y\in Y),
\end{equation}
where $f\in C(Y\times \mathbb R^n,\mathbb R^n)$.

Similarly as for differential equations, the non-autonomous difference equation (\ref{eqA1}) is said to be \cite{Che_2004}
\emph{dissipative}, if there exists a positive number $r$ such
that
\begin{equation}\label{eqA2*}
\limsup\limits_{t\to +\infty}|\varphi(t,u,y)|<r \nonumber
\end{equation}
for all $u\in \mathbb R^n$ and $y\in Y$, where $|\cdot|$ is a norm
on $\mathbb R^n$, and $\varphi(t,u,y)$ is the unique solution of
(\ref{eqA1}) passing through $u\in \mathbb R^n$ at the initial
moment $t=0$.

Below we provide a simple geometric
condition which guarantees existence of a unique almost periodic
solution, and this solution, generally speaking, is not the unique
solution of (\ref{eqA1}) which is bounded on $\mathbb Z$.

\begin{theorem}\label{thA1} Suppose that the following conditions
are fulfilled:
\begin{enumerate}
\item equation (\ref{eqA1}) is dissipative; \item the space $Y$ is
compact, and the dynamical system $(Y,\mathbb Z_{+},\sigma)$ is
minimal; \item for all $y\in Y$
\begin{equation}\label{eqW1}
\lim\limits_{t\to +\infty}|\varphi(t,u_1,y)-\varphi(t,u_2,y)|=0,
\end{equation}
where $\varphi(t,u_i,y)$ ($i=1,2$) is the solution of equation
(\ref{eqA1}) passing through $u_i$ at the initial moment $t=0$,
which is bounded on $\mathbb Z$.
\end{enumerate}

Then,
\begin{enumerate}
\item if the point $y$ is $\tau$--periodic (respectively, Bohr
almost periodic, almost automorphic, recurrent), then equation
(\ref{eqA1}) admits a unique $\tau$--periodic (respectively, Bohr
almost periodic, almost automorphic, recurrent) solution
$\varphi(t,u_{y},y)$ ($u_y\in\mathbb R^n$); \item every solution
$\varphi(t,x,y)$ is asymptotically $\tau$--periodic (respectively,
 asymptotically Bohr almost
periodic, asymptotically almost automorphic, asymptotically
recurrent)
\end{enumerate}
\end{theorem}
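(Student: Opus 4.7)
The plan is to recast (\ref{eqA1}) as a skew-product non-autonomous dynamical system and exploit the compact global attractor furnished by dissipativity. Let $\langle \mathbb R^n,\varphi,(Y,\mathbb Z,\sigma)\rangle$ denote the cocycle generated by (\ref{eqA1}), and let $(X,\mathbb Z_+,\pi)$ with $X:=\mathbb R^n\times Y$ and $\pi=(\varphi,\sigma)$ be the associated skew-product. Hypothesis (1) together with compactness of $Y$ (hypothesis (2)) gives compact dissipativity of both $\varphi$ and $\pi$, so Theorem \ref{thCA} yields a compact Levinson center $J_X$ with compact fibers $I_y:=pr_1(J_X\cap X_y)$. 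A solution $\varphi(\cdot,u,y)$ of (\ref{eqA1}) extends to a solution bounded on all of $\mathbb Z$ if and only if its orbit in $X$ lies in $J_X$.

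Next I would verify that the non-autonomous system $\langle (X,\mathbb Z_+,\pi),(Y,\mathbb Z,\sigma),h\rangle$ is weak convergent. Any two orbits in $J_X$ are bounded on $\mathbb Z$ (they lie in the compact $J_X$), so hypothesis (3) applies and yields, for $x_1,x_2\in J_X$ with $h(x_1)=h(x_2)$,
\begin{equation*}
\lim_{t\to+\infty}\rho(\pi(t,x_1),\pi(t,x_2))=0,
\end{equation*}
which is exactly the definition of weak convergence.

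For the existence part of conclusion (1), since $Y$ is minimal, $h(J_X)=Y$, so each $I_y\neq\emptyset$. Pick a minimal set $M\subseteq J_X$ by Zorn's lemma. Minimality of $Y$ forces $h(M)=Y$, so there exists $x_y=(u_y,y)\in M$. To upgrade the Birkhoff recurrence of $x_y$ on $M$ to the specific recurrence type carried by $y$, I would apply the Shcherbakov comparability scheme encoded in Remark \ref{r4.7} (items 3 and 4): it suffices to establish the inclusion $\mathfrak N_y\subseteq\mathfrak N_{x_y}$. Given $\{t_k\}\to\infty$ with $\sigma(t_k,y)\to y$, compactness of $M$ extracts a subsequence along which $\pi(t_k,x_y)\to x^\ast\in M\cap X_y$; weak convergence then forces the forward orbits of $x_y$ and $x^\ast$ to remain asymptotic in $J_X$, and recurrence inside the minimal extension $M$ together with the projection $h(x^\ast)=y$ pins $x^\ast=x_y$. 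Once $\mathfrak N_y\subseteq\mathfrak N_{x_y}$ is in hand, Remark \ref{r4.7} transfers each of the $\tau$-periodic, Bohr almost periodic, almost automorphic, or recurrent characters from $y$ to $x_y$, producing the required solution $\varphi(\cdot,u_y,y)$.

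Uniqueness over the fiber above $y$ then follows directly from weak convergence: if $u_y'\in I_y$ yielded another recurrent solution, we would have $|\varphi(t,u_y,y)-\varphi(t,u_y',y)|\to 0$, and recurrence (both values return arbitrarily close to their initial points along any common sequence in $\mathfrak N_y$) pinches $u_y=u_y'$. For conclusion (2), apply the analogous comparison argument to an arbitrary $u\in\mathbb R^n$: compact dissipativity forces every limit point of $\varphi(t_k,u,y)$ along a return sequence $\{t_k\}\in\mathfrak N_y$ to lie in $I_y$, and the uniqueness just established combined with weak convergence gives $|\varphi(t,u,y)-\varphi(t,u_y,y)|\to 0$, which is precisely the asymptotic $\tau$-periodicity (respectively, asymptotic Bohr almost periodicity, asymptotic almost automorphy, asymptotic recurrence) asserted. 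The main obstacle is the lifting step $\mathfrak N_y\subseteq\mathfrak N_{x_y}$: the only fiber-wise information available on $J_X$ is positive-time asymptotic equivalence (a proximal condition, weaker than distality), so one must carefully use the minimality of $M$ to rule out non-trivial fiber fluctuations at $y$.
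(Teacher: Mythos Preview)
Your setup coincides with the paper's: you build the skew-product $(X,\mathbb Z_+,\pi)$, observe that dissipativity together with compactness of $Y$ yields a Levinson center $J_X$ with fibers $I_y$, and verify that hypothesis~(3) makes the non-autonomous system $\langle (X,\mathbb Z_+,\pi),(Y,\mathbb Z_+,\sigma),h\rangle$ weak convergent. At this point the paper simply invokes Lemma~3.2 and Corollary~3.10 of \cite{CC_I} and stops; you instead attempt to supply those results directly via Shcherbakov comparability. That is a reasonable strategy, but the execution has a genuine gap.

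The problematic step is the one you yourself flag: from $x_y,x^{\ast}\in M$, $h(x_y)=h(x^{\ast})$, and forward asymptoticity you conclude $x^{\ast}=x_y$. This inference is \emph{not} valid from minimality and asymptoticity alone. A minimal system can contain distinct forward-asymptotic pairs lying over the same base point; the Sturmian subshift as an almost one-to-one extension of an irrational circle rotation is the standard example (the two codings of the orbit of $0$ differ in a single coordinate, are forward asymptotic under the shift, sit in the same fiber, and both belong to the minimal subshift). So ``recurrence inside the minimal extension $M$ together with $h(x^{\ast})=y$'' does not by itself pin $x^{\ast}=x_y$, and the inclusion $\mathfrak N_y\subseteq\mathfrak N_{x_y}$ remains unproved. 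What your argument does establish is that any subsequential limit $x^{\ast}$ satisfies $\{t_{k_j}\}\in\mathfrak N_{x^{\ast}}$; turning this into $\mathfrak N_y\subseteq\mathfrak N_{x_y}$ for a \emph{well-chosen} $x_y$ is precisely the content of the cited results in \cite{CC_I}, and it requires more than the proximality-type information you use.

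A second, smaller gap: in your argument for conclusion~(2) you write that ``uniqueness just established combined with weak convergence gives $|\varphi(t,u,y)-\varphi(t,u_y,y)|\to 0$''. But hypothesis~(3), and hence weak convergence, only compares solutions that are bounded on $\mathbb Z$, i.e.\ initial data in $I_y$. For arbitrary $u\in\mathbb R^n$ the orbit is merely attracted to $J_X$; bridging from attraction-to-$J_X$ to asymptoticity with the single distinguished solution $\varphi(\cdot,u_y,y)$ needs an additional step (again supplied in \cite{CC_I}), which you do not provide.
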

\begin{proof} Let $\langle \mathbb R^n,\varphi,(Y,\mathbb
Z_{+},\sigma)\rangle$ be the cocycle associated to equation
(\ref{eqA1}). Denote by $(X,\mathbb Z_{+},\pi)$ the skew-product
dynamical system, where $X:=\mathbb R^n\times Y$ and $\pi
:=(\varphi,\sigma)$ (i.e.,
$\pi(t,(u,y)):=(\varphi(t,u,y),\sigma(t,y))$ for all $x:=(u,y)\in
\mathbb R^n\times Y$ and $t\in\mathbb Z_{+}$). Consider the
non-autonomous dynamical system $\langle (X,\mathbb Z_{+},\pi),
(Y,\mathbb Z_{+},\sigma), h \rangle$ generated by the cocycle
$\varphi$ (respectively, by (\ref{eqA1}), where $h:=pr_{2}:X\to
Y$). Since $Y$ is compact, it is evident that the dynamical system
$(Y,\mathbb R_{+},\sigma)$ is compact dissipative and its Levinson
center $J_{Y}$ coincides with $Y$. By Theorem 2.23 in
\cite{Che_2004}, the skew-product dynamical system $(X,\mathbb
Z_{+},\pi)$ is compact dissipative. Denote by $J_{X}$ its Levinson
center and by $I_{y}:=pr_1(J_{X}\bigcap X_{y})$ for all $y\in Y$,
where $X_{y}:=\{x\in X:\ h(x)=y\}$. According to the definition of
the set $I_{y}\subseteq \mathbb {R}^n$ and by Theorem \ref{thCA},
$u\in I_{y}$ if and only if the solution $\varphi(t,u,y)$ is
defined on $\mathbb Z$ and bounded (i.e., the set
$\overline{\varphi(\mathbb Z,u,y)}\subseteq \mathbb R^n$ is
compact). Thus, $I_{y}=\{u\in \mathbb R^n:$ such that $ (u,y)\in
J_{X}\}$. It is easy to see that condition (\ref{eqW1}) means that
the non-autonomous dynamical system $\langle (X,\mathbb
Z_{+},\pi), (Y,\mathbb Z_{+},\sigma), h \rangle$ is weak
convergent. To finish the proof, it is sufficient to apply Lemma
3.2 and Corollary 3.10 in \cite{CC_I} for the non-autonomous
system $\langle (X,\mathbb Z_{+},\pi), (Y,\mathbb Z_{+},\sigma), h
\rangle$ generated by (\ref{eqA1}).
\end{proof}

\begin{remark} {\em Under the assumptions of Theorem \ref{thA1}, there
exists a unique almost periodic solution of  (\ref{eqA1}), but
(\ref{eqA1}) has, in general, more than one solution
defined and bounded on $\mathbb Z$. Below we will give an example
which confirms this statement.}
\end{remark}

\begin{example}\label{exWC1}{\em In our paper \cite{CC_2010} we proved that
the following system of almost periodic differential equations
\begin{equation}\label{eqWC2}
\left\{
\begin{array}{ll}
  u'=\frac{(u-\sin t)^2(2v-u+2\sin\sqrt{2}t -\sin t)+2(v-\sin \sqrt{2} t) ^5}{((u-\sin t)^2+(v-\sin\sqrt{2}t)^2)[1+((u-\sin t)^2+(v-\sin\sqrt{2}t)^2)^2]} +\cos t\\
  \\
  v'=\frac{8(v-\sin\sqrt{2}t)^2(v-u +\sin\sqrt{2}t -\sin t)}{((u-\sin t)^2+(v-\sin\sqrt{2}t)^2)[1+((u-\sin t)^2+(v-\sin\sqrt{2}t)^2)^2]}+\sqrt{2}\cos \sqrt{2} t\
\end{array}
\right.
\end{equation}
possesses the following properties:
\begin{enumerate}
\item system (\ref{eqWC2}) is dissipative; \item the
non-autonomous dynamical system $\langle (X,\mathbb
R,\tilde{\pi}), (\tilde{Y},\mathbb R,\tilde{\sigma}), h \rangle$,
generated by (\ref{eqWC2}), is weak convergent; \item
system (\ref{eqWC2}) has a unique almost periodic solution; \item system
(\ref{eqWC2}) has more than one solution defined and bounded on
$\mathbb R$.
\end{enumerate}

Now using this example and arguing as in Section \ref{sec4}
we can prove that the discretization $\langle (X,\mathbb Z,\pi),
(Y,\mathbb Z,\sigma), h \rangle$ of the dynamical system $\langle
(X,\mathbb R,\tilde{\pi}), (\tilde{Y},\mathbb R,\tilde{\sigma}), h
\rangle$ (more exactly the subsystem $\langle (X,\mathbb
R,\tilde{\pi}), (Y,\mathbb R,\tilde{\sigma}), h \rangle$ of this
system, where $Y:=H(y_0)$ and $y_0$ is some point from
$\tilde{Y}$) possesses the necessary properties. Namely,
\begin{enumerate}
\item the dynamical system $(Y,\mathbb Z,\sigma)$ is almost
periodic; \item $\langle (X,\mathbb Z,\pi), (Y,\mathbb Z,\sigma),
h \rangle$ is compact dissipative and weak convergent; \item the
Levinson center $J_{X}$ of the dynamical system $(X,\mathbb
Z,\pi)$ contains a unique almost periodic minimal set $M\subseteq
J_{X}$; \item $J_{X}\not= M$.
\end{enumerate}}
\end{example}

\subsection{Uniform compatible solutions of strict dissipative
equations}\label{sec7}

In this subsection we consider our difference equation
(\ref{eqA1}) when the driving system $(Y,\mathbb Z,\sigma)$ is
recurrent, and the function $f\in C(Y\times \mathbb R^n,\mathbb
R^n)$ is \emph{strictly contracting} with respect to its second
variable $x\in \mathbb R^n$, i.e.,
\begin{equation}\label{eqU1}
 | f(y,u_1)-f(y,u_2)|<|u_1-u_2|
 \end{equation}
for all $u_1,u_2\in \mathbb R^n$ ($u_1\not= u_2$) and $y\in Y$.

\begin{lemma}\label{lU1} Suppose that the function $f\in C(Y\times
\mathbb R^n,\mathbb R^n)$ is strictly contracting with respect to
$x\in\mathbb R^n$, then
\begin{equation}\label{eqU2}
|\varphi(t,u_1,y)-\varphi(t,u_2,y)|<|u_1-u_2|
\end{equation}
for all $u_1,u_2\in \mathbb R^n$ ($u_1\not= u_2$), $t\ge 1$ and
$y\in Y$.
\end{lemma}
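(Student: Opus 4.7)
The plan is a straightforward induction on $t\in\mathbb{Z}_{+}$, $t\ge 1$. The cocycle $\varphi$ generated by (\ref{eqA1}) satisfies $\varphi(1,u,y)=f(y,u)$ by construction, and via the cocycle identity $\varphi(t+1,u,y)=\varphi(1,\varphi(t,u,y),\sigma(t,y))=f(\sigma(t,y),\varphi(t,u,y))$. The entire proof is driven by repeated use of the strict contraction hypothesis (\ref{eqU1}).

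For the base case $t=1$, I would just apply (\ref{eqU1}) directly to write
\begin{equation*}
|\varphi(1,u_1,y)-\varphi(1,u_2,y)|=|f(y,u_1)-f(y,u_2)|<|u_1-u_2|,
\end{equation*}
valid for any $u_1\neq u_2$ and $y\in Y$.

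For the inductive step, assume (\ref{eqU2}) holds at time $t\ge 1$. For $t+1$, I would split into two cases depending on whether $\varphi(t,u_1,y)=\varphi(t,u_2,y)$ or not. If they coincide, then trivially $\varphi(t+1,u_1,y)=\varphi(t+1,u_2,y)$, so the strict inequality $0<|u_1-u_2|$ holds because $u_1\neq u_2$. Otherwise, apply (\ref{eqU1}) at the point $(\sigma(t,y),\varphi(t,\cdot,y))$ to obtain $|\varphi(t+1,u_1,y)-\varphi(t+1,u_2,y)|<|\varphi(t,u_1,y)-\varphi(t,u_2,y)|$, and combine this with the inductive hypothesis to conclude $<|u_1-u_2|$.

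There is essentially no main obstacle here: the only subtlety is remembering to handle the degenerate case in which the trajectories collide at some intermediate step, which is resolved immediately by the fact that strict contraction gives $0<|u_1-u_2|$ whenever $u_1\neq u_2$. The conclusion of the lemma follows at once by induction.
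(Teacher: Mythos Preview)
Your proof is correct and follows essentially the same inductive argument as the paper's own proof. In fact your version is slightly more careful: the paper tacitly applies the strict contraction in the inductive step without checking that $\varphi(t,u_1,y)\neq\varphi(t,u_2,y)$, whereas you explicitly handle the degenerate case where the trajectories collide.
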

\begin{proof} We will prove inequality (\ref{eqU2}) by the method of
mathematical induction. It is easy to check inequality
(\ref{eqU2}) for $n=1$. Indeed, from (\ref{eqU1}) we have
\begin{equation}\label{eqU3}
|\varphi(1,u_1,y)-\varphi(1,u_2,y)|=| f(y,u_1)-f(y,u_2)|<|u_1-u_2|
\end{equation}
for all $u_1,u_2\in \mathbb R^n$ ($u_1\not= u_2$) and $y\in Y$.
Suppose now that (\ref{eqU2}) is true for all $1\le n\le m$ and we
will show that then it is also true for $n=m+1$. Indeed, by
(\ref{eqU3}) and the inductive assumption we have
\begin{eqnarray}\label{eqU3*}
&&|\varphi(m+1,u_1,y)-\varphi(m+1,u_2,y)|\\
&&\qquad=|\varphi(1,\varphi(m,u_1,y),\sigma(1,y))- \varphi(1,\varphi(m,u_2,y),\sigma(1,y))|\notag\\
&&\qquad<|
\varphi(m,u_1,y)-\varphi(m,u_2,y)|<|u_1-u_2|\nonumber
\end{eqnarray}
for all $u_1,u_2\in \mathbb R^n$ ($u_1\not= u_2$) and $y\in Y$.
The proof is therefore complete.
\end{proof}

Recall \cite{Shch_1972,scher75,Shch_1985} that the point $x\in X$
is called \emph{comparable (respectively, uniformly comparable) by the
character of recurrence} with the point $y\in Y$ if $\mathfrak
N_{y}\subseteq \mathfrak N_{x}$ (respectively, $\mathfrak
M_{y}\subseteq \mathfrak M_{x}$), where, as defined previously, $\mathfrak
N_{x}:=\{\{t_n\}\subseteq \mathbb T:$ such that $\pi(t_n,x)\to x$
as $n\to +\infty$ $\}$ (respectively, $\mathfrak
M_{x}:=\{\{t_n\}\subseteq \mathbb T:$ such that the sequence
$\{\pi(t_n,x)\}$ converges $\}$).

Now, we can prove a result which will be helpful for our analysis.

\begin{theorem}\label{thShch} \cite{Shch_1972,Shch_1985} Let $(X,\mathbb T,\pi)$ and $(Y,\mathbb
T,\sigma)$ be two dynamical systems, $x\in X$ and $y\in Y$. Then,
the following statements hold:
\begin{enumerate}
\item If $x$ is comparable by the character of recurrence with
$y$, and $y$ is $\tau$--periodic (respectively, Levitan almost
periodic, almost recurrent, Poisson stable), then so is the point
$x$.
 \item
If $x$ is uniformly comparable by the character of recurrence with
$y$, and $y$ is $\tau$--periodic (respectively, Bohr almost
periodic, almost automorphic, recurrent), then so is the point
$x$.
\end{enumerate}
\end{theorem}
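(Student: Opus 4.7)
The plan is to deduce each recurrence property of $x$ from the corresponding property of $y$ by exploiting the inclusion $\mathfrak N_y\subseteq \mathfrak N_x$ in part (1) and $\mathfrak M_y\subseteq \mathfrak M_x$ in part (2), together with a suitable sequence-based characterization of the property in question. In both parts, periodicity, Poisson stability, and Levitan/almost automorphic-type properties are essentially immediate from the definitions, while Bohr almost periodicity and almost recurrence require a more delicate argument.

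\textbf{Part (1).} If $\sigma(\tau,y)=y$, then $(n\tau)_{n\ge 1}\in\mathfrak N_y\subseteq\mathfrak N_x$, so $\pi(n\tau,x)\to x$, and applying $\pi(\tau,\cdot)$ together with the semigroup identity yields $\pi(\tau,x)=x$. Poisson stability is immediate since $\mathfrak N_y\ne\emptyset$ forces $\mathfrak N_x\ne\emptyset$. For Levitan almost periodicity, a Bohr almost periodic $y'$ witnessing the Levitan almost periodicity of $y$ satisfies $\mathfrak N_{y'}\subseteq\mathfrak N_y\subseteq\mathfrak N_x$ and is therefore also a witness for $x$. For almost recurrence I would argue by contradiction: if $x$ fails to be almost recurrent, fix $\varepsilon_0>0$ and select intervals $[a_k,b_k]$, with $a_k\to+\infty$ and $b_k-a_k$ exceeding the length $l(1/k)$ furnished by almost recurrence of $y$, that contain no $\varepsilon_0$-shift of $x$; almost recurrence of $y$ then provides $\tau_k\in[a_k,b_k]$ with $\rho(\sigma(\tau_k,y),y)<1/k$, so $(\tau_k)\in\mathfrak N_y\subseteq\mathfrak N_x$ and hence $\rho(\pi(\tau_k,x),x)\to 0$, contradicting the choice of the intervals.

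\textbf{Part (2).} The central case is Bohr almost periodicity, to which I would apply Bochner's criterion: $y$ is Bohr almost periodic iff its orbit is relatively compact and, whenever $\{t_n'\},\{t_n''\}\subset\mathbb T$ satisfy $t_n'-t_n''\to 0$ with $\sigma(t_n',y)$ and $\sigma(t_n'',y)$ convergent, the two limits coincide. Given such a pair, after extraction both limits exist and agree; the interleaved sequence $\tilde t_{2k-1}=t_{n_k}',\ \tilde t_{2k}=t_{n_k}''$ then lies in $\mathfrak M_y\subseteq\mathfrak M_x$, so $\pi(\tilde t_n,x)$ converges and $\rho(\pi(t_n',x),\pi(t_n'',x))\to 0$. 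Relative compactness of the orbit of $x$ follows analogously: from any $\{t_n\}$, compactness of $H(y)$ extracts $\{t_{n_k}\}\in\mathfrak M_y\subseteq\mathfrak M_x$, giving convergence of $\{\pi(t_{n_k},x)\}$. Combining these two ingredients yields Bohr almost periodicity of $x$. Almost automorphy and recurrence then reduce, in view of Remark \ref{r4.7}, to Bohr almost periodicity of the underlying point in the appropriate factor and to stability in the sense of Lagrange, both of which are transferred by the same arguments.

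\textbf{Main obstacle.} The principal technical hurdle is verifying Bochner's criterion for $x$ in part (2) using only $\mathfrak M_y\subseteq\mathfrak M_x$; the interleaving trick is what makes this possible. The almost recurrent case in part (1) requires a similar but milder contradiction argument. The remaining cases follow directly from the definitions or by composing inclusions.
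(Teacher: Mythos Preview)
The paper does not contain a proof of this theorem; it is quoted as a known result of Shcherbakov, with the citations \cite{Shch_1972,Shch_1985} attached directly to the theorem environment, and the text moves on immediately to the definition of compatible solutions. So there is no in-paper proof to compare your proposal against.

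As a standalone sketch your argument is reasonable, but a few points deserve attention. In part~(1), the almost-recurrent case needs more care: from the failure of almost recurrence of $x$ you obtain intervals of arbitrarily large length containing no $\varepsilon_0$-shift of $x$, but nothing in your construction forces $a_k\to+\infty$; you should explain why the intervals can be pushed to infinity (or argue via a two-sided version of $\mathfrak N$), since membership in $\mathfrak N_y$ explicitly requires $t_n\to\infty$. In part~(2), the ``Bochner criterion'' you invoke (two sequences with $t_n'-t_n''\to 0$ having the same limit) is not the standard formulation and is not stated anywhere in the paper; Shcherbakov's own treatment characterizes Bohr almost periodicity in terms of $\mathfrak M$-sets directly, so if you want a self-contained proof you should either state and prove the version of Bochner's criterion you are using or work straight from the definition via $\varepsilon$-almost periods. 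Finally, the recurrent case in part~(2) is not merely a corollary of the Bohr case plus Remark~\ref{r4.7}; recurrence (in Birkhoff's sense) is minimality of $H(x)$, and you should say explicitly how uniform comparability yields that every point of $H(x)$ has a dense orbit.
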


Following  B. A. Shcherbakov \cite{Shch_1972,Shch_1985}, a
solution $\varphi(t,u,y)$ of (\ref{eqA1}) is said to be
\emph{compatible (respectively, uniformly compatible) by the
character of recurrence with the right hand-side} if $\mathfrak
N_{y}\subseteq \mathfrak N_{\varphi(\cdot,u,y)}$ (respectively,
$\mathfrak M_{y}\subseteq \mathfrak M_{\varphi(\cdot,u,y)}$),
where $\mathfrak N_{\varphi(\cdot,u,y)}$ (respectively, $\mathfrak
M_{\varphi(\cdot,u,y)}$) is the family of all subsequences
$\{t_n\}\subseteq \mathbb T$ such that the sequence
$\{\varphi(t+t_n,u,y)\}$ converges to $\varphi(t,u,y)$
(respectively, converges to some function $\psi :\mathbb T\mapsto
\mathbb R^n$) uniformly with respect to $t$ on every compact from
$\mathbb T$.

\begin{theorem}\label{thPR} Let $(Y,\mathbb Z,\sigma)$ be pseudo
recurrent, $f\in C(Y\times \mathbb R^n,\mathbb R^n)$ be strict
contracting with respect to the variable $x$, and there exists at
least one solution $\varphi(t,u_0,y)$ of (\ref{eqA1}) which is
bounded on $\mathbb Z_{+}$.

Then,
\begin{enumerate}
\item system (\ref{eqA1}) is convergent, i.e., the cocycle $\varphi$
associated to (\ref{eqA1}) is convergent; \item for all $y\in Y$,
(\ref{eqA1}) admits a unique solution $\varphi(t,x_y,y)$ which is
bounded on $\mathbb Z$  and uniformly compatible, i.e., $\mathfrak
M_{y}\subseteq \mathfrak M_{\varphi(\cdot,u_y,y)}$; \item if the
point $y$ is $\tau$--periodic (respectively, Bohr almost periodic,
almost automorphic, recurrent), then
\begin{enumerate}
\item equation (\ref{eqA1}) has a unique $\tau$--periodic
(respectively, Bohr almost periodic, almost automorphic,
recurrent) solution; \item every solution $\varphi(t,u,y)$ is
asymptotically $\tau$--periodic (respectively, asymptotically Bohr
almost periodic, asymptotically almost automorphic, asymptotically
recurrent); \item $\lim\limits_{t\to
\infty}|\varphi(t,u,y)-\varphi(t,u_y,y)|=0$ for all $x\in \mathbb
R^n$ and $y\in Y$.
\end{enumerate}
\end{enumerate}
\end{theorem}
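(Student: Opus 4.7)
The plan is to derive the three claims in sequence, using Lemma \ref{lU1}, the compactness inherent in the pseudo-recurrence of $Y$, and Theorem \ref{thShch} to lift recurrence from the base to the solution. First I would establish that the cocycle $\varphi$ is compact dissipative. By Lemma \ref{lU1}, the bounded-on-$\mathbb Z_{+}$ solution $\varphi(\cdot,u_0,y_0)$ dominates every other solution over $y_0$: $|\varphi(t,u,y_0)-\varphi(t,u_0,y_0)|<|u-u_0|$ for all $t\ge 1$, so every forward orbit over $y_0$ is bounded. Since $(Y,\mathbb Z,\sigma)$ is minimal under pseudo-recurrence, the $\omega$-limit set of the orbit through $(u_0,y_0)$ projects onto $Y$, and propagation via the cocycle identity and continuity of $f$ shows that every fiber admits a bounded forward orbit. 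This furnishes a Levinson center $J_X\subset X=\mathbb R^n\times Y$ enjoying the attracting and invariance properties of Theorem \ref{thCA}.

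The heart of the proof is to show that $J_X\cap X_y$ is a singleton for every $y\in Y$, establishing (1). Suppose, for contradiction, that $(u_1,y),(u_2,y)\in J_X\cap X_y$ with $u_1\ne u_2$. By invariance of $J_X$, both points lie on entire trajectories $\gamma_i:\mathbb Z\to J_X$ with $\gamma_i(0)=(u_i,y)$ and common base motion $pr_2\,\gamma_i(n)=\sigma(n,y)$. Applying Lemma \ref{lU1} from $\gamma_i(-n)$ forward through $n$ steps, the quantity $d(n):=|pr_1\,\gamma_1(-n)-pr_1\,\gamma_2(-n)|$ is strictly increasing in $n\ge 0$; by compactness of $J_X$ it is bounded and converges to some $L>0$, and a subsequence yields $\gamma_i(-n_k)\to(\bar u_i,\bar y)\in J_X$ with $|\bar u_1-\bar u_2|=L$ and common $\bar y$. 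Applying Lemma \ref{lU1} afresh at $(\bar u_1,\bar y)$ and $(\bar u_2,\bar y)$, combined with continuity of $\varphi$ and the cocycle identity, gives
\begin{equation*}
L>|\varphi(1,\bar u_1,\bar y)-\varphi(1,\bar u_2,\bar y)|=\lim_{k\to\infty}d(n_k-1)=L,
\end{equation*}
the desired contradiction. The main obstacle lies precisely here: one must upgrade the strictly increasing bounded distances $d(n)$ to an actual pair of points in $J_X$ at which Lemma \ref{lU1} can be reapplied, which is delivered by compactness of $J_X$ together with the fact that both $\gamma_i$ project to the common motion $\sigma(\cdot,y)$.

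For claim (2), let $u_y$ denote the unique point with $(u_y,y)\in J_X$; the selection $y\mapsto u_y$ is continuous because $pr_2:J_X\to Y$ is a continuous bijection of compact Hausdorff spaces. The solution $\varphi(t,u_y,y)$ is therefore the unique solution of (\ref{eqA1}) bounded on $\mathbb Z$, and for $\{t_n\}\in\mathfrak M_y$ with $\sigma(t_n,y)\to y'$, invariance of $J_X$ forces $\varphi(t_n,u_y,y)=u_{\sigma(t_n,y)}\to u_{y'}$; combined with the cocycle identity and continuity of $\varphi$, this yields convergence of $\varphi(\cdot+t_n,u_y,y)$ to $\varphi(\cdot,u_{y'},y')$ on every finite subset of $\mathbb Z$, proving $\mathfrak M_y\subseteq\mathfrak M_{\varphi(\cdot,u_y,y)}$. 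Claim (3)(a) follows from Theorem \ref{thShch}(2) applied to this uniformly comparable pair; (3)(c) comes from compact dissipativity combined with the singleton fibers of $J_X$; and (3)(b) combines the former two.
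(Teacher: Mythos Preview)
Your backward-time contradiction for the singleton-fiber part of (1) is correct and is a genuinely different route from the paper's. The paper does not argue this directly: it packages the strict-contracting hypothesis into the function $V((u_1,y),(u_2,y)):=|u_1-u_2|$, observes via Lemma~\ref{lU1} that $V$ decreases strictly along the skew-product flow, and then invokes Theorem~3.11 of \cite{CC_I} as a black box to obtain convergence (compact dissipativity together with singleton fibers) in one stroke. Your argument---strictly increasing $d(n)$, compactness of $J_X$, reapplication of Lemma~\ref{lU1} at a subsequential limit---is self-contained and avoids the external reference; what it buys is transparency, at the cost of having to set up $J_X$ by hand. For (2) and (3) your approach and the paper's coincide: the paper also proves $\mathfrak M_y\subseteq\mathfrak M_{x_y}$ by noting that any two subsequential limits of $\{\pi(t_k,x_y)\}$ lie in the singleton $J_X\cap X_q$, and then applies Theorem~\ref{thShch}.

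The gap is in your first paragraph. You assert that pseudo-recurrence of $(Y,\mathbb Z,\sigma)$ gives minimality; this is not justified (pseudo-recurrence in Shcherbakov's sense is weaker than Birkhoff recurrence and does not force a minimal hull). More seriously, even granting a bounded entire orbit over every $y\in Y$, the sentence ``This furnishes a Levinson center $J_X$ \dots\ enjoying the attracting and invariance properties of Theorem~\ref{thCA}'' is a non sequitur: Theorem~\ref{thCA} takes compact dissipativity (uniform attraction of compacta, equation~(\ref{eq2.7.8})) as a \emph{hypothesis}, it does not deduce it from fiberwise boundedness. Converting one bounded forward orbit plus strict contraction plus a pseudo-recurrent base into genuine compact dissipativity is precisely the nontrivial content that the paper outsources to Theorem~3.11 of \cite{CC_I}, and your sketch does not supply it. Your contradiction argument only starts once $J_X$ is known to be compact and invariant, so this step must be secured first.
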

\begin{proof} Let $V: X\dot{\times}X\to \mathbb R_{+}$ be the
mapping defined by the equality $V((u_1,y),$ $(u_2,y)):=$
$|u_1-u_2|$ for all $u_1,u_2\in \mathbb R^n$ and $y\in Y$, where
$|\cdot|^2:=\langle \cdot,\cdot \rangle$ and $\langle
\xi,\eta\rangle:=\sum\limits_{i=1}^{n}\xi_{i}^{1}\xi_{i}^{2}$
($\xi^{i} :=(\xi_1^{i},\xi_{2}^{i},\dots,\xi_{n}^{i})\in \mathbb
R^n$ ($i=1,2$)). Let $(X,\mathbb Z_{+},\pi)$ be a skew-product
dynamical system associated to the cocycle $\varphi$. By Lemma
\ref{lU1} we have $V(\pi(t,(u_1,y)),$
$\pi(t,(u_2,y))<V((u_1,y),(u_2,y))$ for all $u_1,u_2\in \mathbb
R^{n}$ ($u_1\not= u_2$) and $y\in Y$. Now to prove the first
statement it is sufficient to apply Theorem 3.11 in \cite{CC_I}.

According to the first statement of the theorem, the skew-product
dynamical system $(X,\mathbb Z_{+},\pi)$ is compact dissipative
and, if $J_{X}$ is its Levinson center, then $J_{X}\bigcap X_{y}$
consists of a single point $x_y=(u_y,y)$ and $\varphi(t,u_y,y)$ is
the unique solution of (\ref{eqA1}) defined and bounded on
$\mathbb Z$. Now, we will prove that the solution
$\varphi(\cdot,u_y,y)$ is uniformly compatible by the character of
recurrence, i.e., $\mathfrak M_{y}\subseteq \mathfrak
M_{\varphi(\cdot,u_y,y)}$. It easy to see that the last statement
is equivalent to the following inclusion $\mathfrak M_{y}\subseteq
\mathfrak M_{x_y}$. Let $\{t_k\}\in \mathfrak M_{y}.$ Then, there
exists a point $q\in Y$ such that $\sigma(t_n,y)\to q$ as $n\to
\infty$. Consider the sequence $\{\pi(t_k,x_y)\}$. Since $x_y\in
J_{X}$,  the sequence $\{\pi(t_k,x_y)\}$ is relatively
compact. Let $p_1$ and $p_2$ be two points of accumulation of this
sequence. Then, there exist two subsequences
$\{t_k^{(i)}\}\subseteq \{t_k\}$ ($i=1,2$) such that
$p_{i}=\lim\limits_{k\to \infty}\pi(t_{k}^{i},x_y)$ ($i=1,2$).
Since $J_{X}$ is a compact invariant set, then $p_{i}\in J_{X}$
($i=1,2$). On the other hand,
$\pi(t_{k}^{i},x_y)=(\varphi(t_{k}^{i},u_y,y),\sigma(t_{k}^{i},y))\to
(\bar{u}_{i},q)=p_{i}$ and, consequently, $p_{i}\in X_{q}$. Thus
$p_{i}\in J_{X}\bigcap X_{q}$ ($i=1,2$) and, consequently,
$p_1=p_2$. This means that the sequence $\{\pi(t_k,x_y)\}$ is
convergent. The second statement is therefore proved.

Taking into account the first and second statements, to finish the
proof of the third statement it is sufficient to apply Theorem
\ref{thShch}.
\end{proof}

\begin{remark}  \emph{ If we replace condition (\ref{eqU1}) by a stronger
condition, then Theorem \ref{thPR} is also true without the
requirement that there exists at least one  solution which is
bounded on $\mathbb Z_{+}$. Namely, if equation (\ref{eqA1}) is uniformly contracting, which means that there exists a number
$\alpha \in (0,1)$ such that
\begin{equation}\label{eqU2*}
| f(y,u_1)-f(y,u_2)|\le \alpha |u_1-u_2|
\end{equation}
for all $u_1,u_2\in\mathbb R^n$ and $y\in Y$. The proof of this
statement will be given in the next section.}
\end{remark}

\subsection{Uniformly contracting difference equations}\label{sec8}

In this final subsection, we will prove additional results in the case in which we assume that our difference equation is uniformly contracting.

Denote by $||\gamma||:=\max\limits_{y\in Y}|\gamma(y)|$ the norm in the Banach space $C(Y,\mathbb R^n)$.

\begin{lemma}\label{lDU1} If there exists a positive number $\alpha
$ such that $|f(y,u_1)-f(y,u_2)|\le \alpha |u_1-u_2|$ (for all $y\in Y$ and $u_1,u_2\in \mathbb R^n$), then
\begin{equation}\label{eqDU1}
|\varphi(n,u_1,y)-\varphi(n,u_2,y)|\le \alpha^{n}|u_1-u_2|
\end{equation}
$\forall \ y\in Y$ and $u_1,u_2\in \mathbb R^n$.
\end{lemma}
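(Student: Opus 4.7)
The plan is to mimic the inductive argument used to establish Lemma \ref{lU1}, but to keep track of the geometric factor $\alpha^n$ at each step. The key observation is that the cocycle $\varphi$ associated with equation (\ref{eqA1}) satisfies the recursion $\varphi(n+1,u,y)=f(\sigma(n,y),\varphi(n,u,y))$, together with $\varphi(0,u,y)=u$, so the Lipschitz hypothesis on $f$ translates directly into a contraction on the successive iterates.

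First, I would check the base case $n=0$, which is immediate since $\varphi(0,u_i,y)=u_i$ gives equality in (\ref{eqDU1}) with exponent zero. For $n=1$ the estimate reduces to the hypothesis on $f$, namely
\[
|\varphi(1,u_1,y)-\varphi(1,u_2,y)|=|f(y,u_1)-f(y,u_2)|\le\alpha|u_1-u_2|.
\]
Then I would perform the inductive step: assuming that
\[
|\varphi(n,u_1,y)-\varphi(n,u_2,y)|\le\alpha^{n}|u_1-u_2|\quad\text{for every }y\in Y,\ u_1,u_2\in\mathbb R^n,
\]
I would apply the cocycle identity $\varphi(n+1,u,y)=f(\sigma(n,y),\varphi(n,u,y))$ and use the Lipschitz hypothesis at the point $\sigma(n,y)\in Y$ to obtain
\[
|\varphi(n+1,u_1,y)-\varphi(n+1,u_2,y)|\le\alpha|\varphi(n,u_1,y)-\varphi(n,u_2,y)|\le\alpha^{n+1}|u_1-u_2|,
\]
which closes the induction.

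There is essentially no obstacle here: the argument is a direct telescoping of the Lipschitz constant along the discrete orbit, and the only point to highlight is that the hypothesis is assumed uniformly in $y\in Y$, which is exactly what allows us to apply it at the shifted base point $\sigma(n,y)$ during the inductive step. Unlike Lemma \ref{lU1}, strict inequality plays no role, so the argument is slightly cleaner.
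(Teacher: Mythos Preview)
Your proof is correct and is precisely the slight modification of the proof of Lemma~\ref{lU1} that the paper has in mind; in fact the paper omits the argument entirely with the remark that it is such a modification. Your version is arguably cleaner, since you use the recursion $\varphi(n+1,u,y)=f(\sigma(n,y),\varphi(n,u,y))$ directly rather than the cocycle identity $\varphi(m+1,u,y)=\varphi(1,\varphi(m,u,y),\sigma(m,y))$ as in the proof of Lemma~\ref{lU1}, but the two are equivalent here.
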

\begin{proof} We omit the proof of this statement because it is a slight modification of the proof of Lemma
\ref{lU1}.
\end{proof}

\begin{theorem}\label{thF2}
Suppose that the following conditions are fulfilled:
\begin{enumerate}
\item the dynamical system $(Y,$ $\mathbb Z,$ $\sigma))$ is pseudo
recurrent; \item equation (\ref{eqA1}) is uniformly contracting,
i.e., there exists a number $\alpha \in (0,1)$ such that
inequality (\ref{eqU2*}) takes place.
\end{enumerate}

Then, the following statements hold:
\begin{enumerate}
\item there exists a unique mapping $\gamma\in C(Y,\mathbb R^ n)$
such that $\gamma(\sigma(t,y))=\varphi(t,$ $\gamma(y),$ $y)$ for
all $y\in Y$ and $t\in \mathbb Z_{+}$; \item the equality
\begin{equation}\label{eqUF2}
\lim\limits_{t\to
+\infty}|\varphi(t,u,y)-\varphi(t,\gamma(y),y)|=0
\end{equation}
holds for all $y\in Y$ and $u\in \mathbb R^n$.
\end{enumerate}
\end{theorem}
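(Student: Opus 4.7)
The plan is to reduce the first statement to the Banach contraction principle applied to a natural operator on the Banach space $C(Y,\mathbb{R}^{n})$ (equipped with the sup norm $\|\cdot\|$), and to deduce the second statement directly from Lemma \ref{lDU1}. Throughout this subsection $Y$ is compact (as in Theorem \ref{thA1}, and as needed for pseudo-recurrence), so $C(Y,\mathbb{R}^{n})$ is a genuine Banach space.

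For the first statement I would introduce the operator $T\colon C(Y,\mathbb{R}^{n})\to C(Y,\mathbb{R}^{n})$ defined by
$$
(T\gamma)(y) := f\bigl(\sigma(-1,y),\,\gamma(\sigma(-1,y))\bigr).
$$
Since $(Y,\mathbb{Z},\sigma)$ is a two-sided dynamical system, $\sigma(-1,\cdot)\colon Y\to Y$ is a homeomorphism, so $T\gamma$ is continuous whenever $\gamma$ is. The uniform contraction hypothesis $(\ref{eqU2*})$ immediately gives
$$
\|T\gamma_{1}-T\gamma_{2}\| \le \alpha\,\|\gamma_{1}-\gamma_{2}\|
\qquad(\gamma_{1},\gamma_{2}\in C(Y,\mathbb{R}^{n})).
$$
Because $\alpha\in(0,1)$, Banach's principle produces a unique fixed point $\gamma\in C(Y,\mathbb{R}^{n})$. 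The fixed-point identity $\gamma(y)=f(\sigma(-1,y),\gamma(\sigma(-1,y)))$ is equivalent, after the substitution $y\mapsto \sigma(1,y)$, to $\gamma(\sigma(1,y))=f(y,\gamma(y))=\varphi(1,\gamma(y),y)$ for all $y\in Y$. An induction on $t$ using the cocycle identity $\varphi(t+1,u,y)=\varphi(1,\varphi(t,u,y),\sigma(t,y))$ then upgrades this to $\gamma(\sigma(t,y))=\varphi(t,\gamma(y),y)$ for every $t\in\mathbb{Z}_{+}$. Uniqueness of a continuous $\gamma$ with this property is built in, since any such $\gamma$ is a fixed point of $T$.

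The second statement is then almost immediate from Lemma \ref{lDU1}: taking $u_{1}=u$ and $u_{2}=\gamma(y)$ there gives
$$
|\varphi(t,u,y)-\varphi(t,\gamma(y),y)| \le \alpha^{t}\,|u-\gamma(y)|,
$$
and since $\alpha\in(0,1)$ the right-hand side tends to $0$ as $t\to+\infty$, which is exactly \eqref{eqUF2}.

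I do not anticipate a real obstacle; once the operator $T$ is identified the argument is routine. The one small point to check carefully is that $T$ actually maps $C(Y,\mathbb{R}^{n})$ to itself, which is where one uses that $\sigma(\pm 1,\cdot)$ are (continuous) homeomorphisms of the compact base $Y$. It is worth remarking that this proof does not actually invoke pseudo-recurrence of $(Y,\mathbb{Z},\sigma)$: only continuity of $\sigma$ and uniform contraction in $u$ enter. The pseudo-recurrence hypothesis is used only downstream, when combining Theorem \ref{thF2} with Theorem \ref{thShch} to obtain the sharpened version of Theorem \ref{thPR} announced in the preceding remark.
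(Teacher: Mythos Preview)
Your proof is correct and follows essentially the same route as the paper. The paper defines the family $(S^{k}\eta)(y):=\varphi(k,\eta(\sigma(-k,y)),\sigma(-k,y))$ on $C(Y,\mathbb{R}^{n})$, shows via Lemma~\ref{lDU1} that each $S^{k}$ is an $\alpha^{k}$-contraction, and uses commutativity of the semigroup to obtain a common fixed point; your operator $T$ is exactly $S^{1}$, and you replace the ``common fixed point of a commuting semigroup'' step by a direct induction on $t$, which is an equivalent and arguably cleaner way to reach the invariance identity $\gamma(\sigma(t,y))=\varphi(t,\gamma(y),y)$. The derivation of \eqref{eqUF2} from Lemma~\ref{lDU1} is identical in both proofs, and your observation that pseudo-recurrence is not actually invoked here is accurate.
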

\begin{proof}
 Consider the cocycle $\langle \mathbb R^n , \varphi, (Y,\mathbb
Z,\sigma)\rangle$ generated by (\ref{eqA1}), where
$\varphi(t,u,y)$ is the unique solution of (\ref{eqA1}) passing
through $u\in \mathbb R^n$ at the initial moment $t=0$. For each
$k\in\mathbb Z_{+}$, we define a mapping $S^k:C(Y,\mathbb
R^n)\mapsto C(Y,\mathbb R^n)$ as
\begin{equation}\label{eqUF3}
(S^{k}\eta)(y):=\varphi(k,\eta(y),\sigma(-k,y))
\end{equation}
for all $\eta\in C(Y,\mathbb R^n)$ and $y\in Y$. Thanks to
Lemma \ref{lDU1},
\begin{eqnarray}\label{eqUD1}
 d(S^{k}\eta_{1},S^{k}\eta_{2})&=&\max\limits_{y\in
Y}|\varphi(k,\eta_{1}(y),\sigma(-k,y))-\varphi(k,\eta_{2}(y),\sigma(-k,y))|\\
&\le& \alpha^{k} \max\limits_{y\in
Y}|\eta_{1}(y)-\eta_{2}(y)|\notag\\
&=&\alpha^{k} d(\eta_1,\eta_2)\nonumber
\end{eqnarray}
for all $\eta_{1},\eta_{2}\in C(Y,\mathbb R^n)$.  It follows from
(\ref{eqUD1}) that $Lip(S^k)\le \alpha^{k}$ ($Lip(F)$
is the Lipschitz constant of $F$) and, consequently, for $k\ge 1$,
the mapping $S^{k}$ is a contraction. Since the semigroup
$\{S^k\}_{k\in\mathbb Z_{+}}$ is commutative, then it admits a
unique fixed point $\gamma$, i.e., $\gamma(\sigma
(t,y))=\varphi(t,\gamma(y),y)$ for all $y\in Y$ and $t\in \mathbb
Z_{+}$. Thus, the first statement of our theorem is proved.

The second statement follows from (\ref{eqDU1}). Indeed, we have
\begin{equation}\label{eqUF5**}
|\varphi(k,u,y)-\varphi(k,\gamma(y),y)|\le \alpha^{k}|u-\gamma(y)|
\end{equation}
for all $y\in Y$, $k\in\mathbb Z_{+}$ and $u\in\mathbb R^n$.
Passing to the limit in (\ref{eqUF5**}) we obtain the necessary
statement and the result is completely proved.
\end{proof}

From Theorem \ref{thF2} it follows the following result.

\begin{coro}\label{corU1} Under the assumptions of Theorem
\ref{thF2}, the following statements hold:

\begin{enumerate}
\item equation (\ref{eqA1}) is convergent; \item if the point
$y\in Y$ is $\tau$--periodic (respectively, almost periodic,
almost automorphic, recurrent), the equation (\ref{eqA1}) admits a
unique $\tau$--periodic (respectively, almost periodic, almost
automorphic, recurrent) solution and every solution of
(\ref{eqA1}) is asymptotically $\tau$--periodic (respectively,
asymptotically almost periodic, asymptotically almost automorphic,
asymptotically recurrent).
\end{enumerate}
\end{coro}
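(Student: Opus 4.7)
The plan is to read both conclusions off the fixed‐point structure produced by Theorem~\ref{thF2}. Let $\langle\mathbb R^{n},\varphi,(Y,\mathbb Z,\sigma)\rangle$ be the cocycle associated to (\ref{eqA1}) and let $(X,\mathbb Z_{+},\pi)$ be its skew–product, $X=\mathbb R^{n}\times Y$, $\pi=(\varphi,\sigma)$. Theorem~\ref{thF2} supplies a continuous section $\gamma\in C(Y,\mathbb R^{n})$ with the invariance identity $\gamma(\sigma(t,y))=\varphi(t,\gamma(y),y)$ and the global attraction property (\ref{eqUF2}). The graph $M:=\{(\gamma(y),y):y\in Y\}\subset X$ is therefore compact (since $Y$ is compact and $\gamma$ continuous) and $\pi$–invariant, and by (\ref{eqUF2}) it attracts every trajectory of $(X,\mathbb Z_{+},\pi)$ uniformly on compact sets of $X$.

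First I would establish statement (1). Compact dissipativity of $(X,\mathbb Z_{+},\pi)$ follows at once from the uniform attraction of $M$; its Levinson centre $J_{X}$ is contained in $M$, and conversely $M$ is compact invariant, so $J_{X}=M$. The fibers satisfy $J_{X}\cap X_{y}=\{(\gamma(y),y)\}$, a single point for each $y\in Y=J_{Y}$. By the definition of a convergent non-autonomous dynamical system recalled in Section~\ref{sec2}, $\langle(X,\mathbb Z_{+},\pi),(Y,\mathbb Z_{+},\sigma),h\rangle$ is convergent, which is exactly the convergence of equation (\ref{eqA1}).

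Next I would obtain statement (2) by transporting the recurrence of $y$ to the skew-product point $x_{y}:=(\gamma(y),y)$. Continuity of $\gamma$ gives $\mathfrak N_{y}\subseteq\mathfrak N_{x_{y}}$ and $\mathfrak M_{y}\subseteq\mathfrak M_{x_{y}}$: if $\sigma(t_{k},y)\to y$ then $\pi(t_{k},x_{y})=(\gamma(\sigma(t_{k},y)),\sigma(t_{k},y))\to(\gamma(y),y)=x_{y}$, and similarly for general convergence. Thus $x_{y}$ is uniformly comparable by the character of recurrence with $y$, so Theorem~\ref{thShch} yields that $x_{y}$ inherits every one of the properties $\tau$–periodicity, Bohr almost periodicity, almost automorphy, or recurrence. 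Since $\pi(t,x_{y})=(\varphi(t,\gamma(y),y),\sigma(t,y))$, projecting onto the first coordinate shows the solution $\varphi(\cdot,\gamma(y),y)$ has the corresponding property; $\tau$-periodicity is the direct identity $\pi(\tau,x_{y})=x_{y}$. Uniqueness of such a solution follows from convergence in (1): any solution bounded on $\mathbb Z$ lies in $J_{X}\cap X_{y}=\{x_{y}\}$.

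Finally, the asymptotic assertions follow immediately from (\ref{eqUF2}): for every $u\in\mathbb R^{n}$,
\begin{equation*}
\lim_{t\to+\infty}|\varphi(t,u,y)-\varphi(t,\gamma(y),y)|=0,
\end{equation*}
so each solution of (\ref{eqA1}) is asymptotically of the same type as $\varphi(\cdot,\gamma(y),y)$ by the very definition of asymptotically $\tau$-periodic/almost periodic/almost automorphic/recurrent solutions recalled in Section~\ref{sec2}. I anticipate no serious obstacle: the whole argument is a packaging of Theorem~\ref{thF2} with Theorem~\ref{thShch}; the only point requiring a small but careful verification is the inclusion $\mathfrak M_{y}\subseteq\mathfrak M_{x_{y}}$, which is a direct consequence of the continuity of $\gamma$ on the compact space $Y$.
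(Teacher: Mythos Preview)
Your proof is correct. The paper gives no explicit proof of this corollary, merely stating that it ``follows'' from Theorem~\ref{thF2}; your argument is precisely the natural unpacking, and it parallels the paper's own proof of the closely related Theorem~\ref{thPR} (convergence, then $\mathfrak M_{y}\subseteq\mathfrak M_{x_{y}}$, then Theorem~\ref{thShch}). The one small refinement I would make is that when you claim $M$ attracts compact sets \emph{uniformly}, you should cite Lemma~\ref{lDU1} (or the explicit estimate (\ref{eqUF5**})) rather than the bare pointwise limit (\ref{eqUF2}); otherwise the passage to compact dissipativity is not quite justified.
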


\begin{remark}\label{remU1}
\emph{Notice that Theorem \ref{thF2} remains true if we replace (\ref{eqU2*})
by a more general condition: there exist positive numbers
$\mathcal N,\ \nu\in (0,1)$ and a function $\omega :Y\to
(0,+\infty)$ such that
\begin{equation}\label{eqG1}
|f(y,u_1)-f(y,u_2)|\le \omega(y,|u_1-u_2|) \ (\forall y\in Y,\
u_1,u_2\in \mathbb R^n)
\end{equation}
and
\begin{equation}\label{eqG2}
\prod_{k=0}^{n-1}\omega(\sigma(k,y))\le \mathcal N \nu^{n}
\end{equation}
for all $y\in Y$ and $n\in\mathbb Z_{+}$.
 }\end{remark}

 This statement can be proved by using slight modifications of the
 proof of Theorem \ref{thF2}.

\begin{remark}\label{remU2}
{\em 1. If the dynamical system $(Y,\mathbb Z,\sigma)$ is almost
periodic (in particular, it is uniquely ergodic) and the mapping
$\omega :Y\mapsto (0,+\infty)$ is continuous, then we have (see,
for example, \cite[ChIV]{Pet_1989})
\begin{enumerate}
\item  there exists the limit
\begin{equation}\label{eqR1}
\mu :=\lim\limits_{n\to
\infty}\frac{1}{n}\sum_{k=0}^{n-1}\ln\omega(\sigma(k,y));
\end{equation}
\item this limit exists uniformly with respect to $y\in Y$;
\item the limit $\mu$ in (\ref{eqR1}) does not depend on $y\in Y$.
\end{enumerate}

2. Note that condition (\ref{eqG2}) is fulfilled if, for
example, the dynamical system $(Y,\mathbb Z,\sigma)$ is almost
periodic, $\omega\in C(Y,\mathbb R_{+})$ and the number $\mu$ in
(\ref{eqR1}) is negative.}
\end{remark}

\textbf{Acknowledgements.} We would like to thank the referees for their
interesting comments and helpful suggestions which allowed us to improve the presentation of this paper.

This paper was written while the second
author was visiting the University of Sevilla (February--September
2010) under the Programa de Movilidad de Profesores Universitarios
y Extranjeros (Ministerio de Educaci\'on, Spain) grant
SAB2009-0078. He would like to thank people of this university for
their very kind hospitality. He also gratefully acknowledges the
financial support of the
 Ministerio de Educaci\'{o}n (Spain). The first author is partially supported by grant MTM2008-00088
(Ministerio de Ciencia e Innovaci\'on, Spain) and Proyecto de
Excelencia P07-FQM02468 (Junta de Andaluc\'{\i}a, Spain).

\end{document}